\theoremstyle{plain}
\newtheorem{theorem}{Theorem}[section]
\newtheorem{lemma}[theorem]{Lemma}
\newtheorem{proposition}[theorem]{Proposition}
\theoremstyle{definition}
\theoremstyle{remark}
\numberwithin{equation}{section}
\DeclareMathOperator{\essinf}{ess \,\, inf}
\DeclareMathOperator{\esssup}{ess \,\, sup}
\begin{document}
\title[Calder\'{o}n's Problem with Discontinuous Conductivities]
{Nachman's Reconstruction Method for the Calder\'on problem with discontinuous conductivities}
\author{George Lytle}
\author{Peter Perry}
\author{Samuli Siltanen}
\date{\today}
\begin{abstract}
We show that Nachman's integral equations for the Calder\'{o}n problem, derived for conductivities in $W^{2,p}(\Omega)$, still hold for $L^\infty$ conductivities which are $1$ in a neighborhood of the boundary. We also prove convergence of scattering transforms for smooth approximations to the scattering transform of $L^\infty$ conductivities. We rely on Astala-P\"{a}iv\"{a}rinta's formulation of the Calder\'{o}n problem for a framework in which these convergence results make sense.
\end{abstract}
\maketitle
\tableofcontents

%
%


%
%


\newcommand{\dee}{\partial}
\newcommand{\dbar}{\overline{\partial}}
\newcommand{\dotarg}{\, \cdot \, }
\newcommand{\eps}{\varepsilon}
\newcommand{\lam}{\lambda}
\newcommand{\Lam}{\Lambda}

\newcommand{\sig}{\sigma}

\newcommand{\dint}{\displaystyle{\int}}

\newcommand{\vbar}{\overline{v}}


\newcommand{\kbar}{\overline{k}}
\newcommand{\xbar}{\overline{x}}
\newcommand{\zbar}{\overline{z}}


\newcommand{\bft}{\mathbf{t}}

\newcommand{\C}{\mathbb{C}}
\newcommand{\D}{\mathbb{D}}
\newcommand{\N}{\mathbb{N}}
\newcommand{\R}{\mathbb{R}}

\newcommand{\calH}{\mathcal{H}}
\newcommand{\calL}{\mathcal{L}}
\newcommand{\calP}{\mathcal{P}}

\newcommand{\bD}{\partial \mathbb{D}}


\newcommand{\norm}[2][ ]{\left\| {#2} \right\|_{ #1 }}
\newcommand{\bigO}[1]{\mathcal{O}\left( {#1} \right)}
\newcommand{\dashint}[1]{\int_{#1} {\hspace{-16.55pt} - \hspace{10pt}}}   


\newcommand{\loc}{\mathrm{loc}}
\newcommand{\bdrytrace}[1]{\left. {#1} \right|_{\partial \mathbb{D}}}
\newcommand{\KariLassi}{{Astala-\Lassi}}
\newcommand{\AP}{Astala-P\"{a}iv\"{a}rinta }

\section{Introduction}

Calder\'{o}n's inverse conductivity problem \cite{Calderon80} is to reconstruct  the conductivity $\sigma$ of a conducting body $\Omega$ from boundary measurements. The electrical potential $u$ obeys
the equation
\begin{equation}
\label{bvp}
\begin{aligned}
\nabla \cdot \left( \sigma \nabla u \right)	&=	0\\
\left. u \right|_{\dee \Omega}	&=	 f
\end{aligned}
\end{equation}
where $f \in H^{1/2}(\dee \Omega)$ is the potential on the boundary. The induced current through the boundary
is given by the Dirichlet to Neumann map
\begin{equation}
\label{D.to.N}
\Lam_\sigma f = \sigma \left. \frac{\dee u}{ \dee \nu} \right|_{\dee \Omega}
\end{equation}
where $\nu$ is the outward normal. The Dirichlet-to-Neumann map is known to be bounded from $H^{1/2}(\dee \Omega)$ to $H^{-1/2}(\dee \Omega)$. 

The identifiability problem for Calder\'{o}n's inverse problem has been extensively studied, culminating in Astala-P\"{a}iv\"{a}rinta's proof \cite{AP06a}  that the operator $\Lam_\sig$ determines $\sig$ uniquely provided only that $\sig \in L^\infty$ has essential infimum bounded below by a strictly positive constant. On the other hand, computational methods to recover the conductivity are based on Nachman's \cite{Nachman96} integral equations derived in his analysis of Calder\'{o}n's problem via two-dimensional inverse scattering theory for Schr\"{o}dinger's equation at zero energy. The Schr\"{o}dinger potential corresponding to conductivity $\sigma$ is 
$q= (\Delta \sqrt{\sig})/\sqrt{\sig}$, so that Nachman must assume $\sig \in W^{2,p}(\Omega)$ for some $p>1$ in addition to the essential lower bound. 

Numerical algorithms based on Nachman's reconstruction algorithm routinely incorporate a high-frequency cutoff on the scattering transform to make efficient and tractable code \cite{SMI00}.  These codes can be used to implement efficient reconstructions for applications in medical imaging \cite{HVMV15}.   Suprisingly, numerical algorithms based on Nachman's integral equations remain effective for \emph{discontinuous} conductivities provided that such a suitable high-frequency cutoff is imposed on the scattering transform, as shown in recent numerical studies \cite{APRS14}. 

For smooth conductivities, the high-frequency cutoff can be understood as a regularization technique for the inversion process \cite{KLMS09}. This paper is the first in a series designed to explain, from the analytical point of view, why this is  also be the case for non-smooth conductivities.

Nachman's method uses the given Dirichlet-to-Neumann map to compute the \emph{scattering transform} $\bft$ of the Schr\"{o}dinger potential $q$,  regarded as a potential on $\R^2$ by extending to $0$ outside $\Omega$, and recover $\sig$ from the scattering transform. The potential $q$  has $\sqrt{\sig}$ as its ground state solution if we similarly extend $\sig(x)$ by setting $\sig(x)=1$ on $\R^2 \setminus \Omega$. Known $\dbar$-methods in two-dimensional Schr\"{o}dinger scattering allow one to recover the scattering eigenfunctions (and hence, the ground state) from the scattering data by solving a $\dbar$ problem. 

More precisely, given the potential $q$, one computes the scattering transform $\bft$ from Faddeev's \cite{Faddeev65} complex geometric optics (CGO) solutions of the the Schr\"{o}dinger equation:
\begin{equation}
\label{CGO}
\begin{aligned}
(-\Delta + q )\psi	&=	0,	\\
\lim_{|x| \to \infty} \psi(x,k) e^{-ikx} - 1 &=	0,
\end{aligned}
\end{equation}
where $k=k_1+ik_2$ is a complex parameter and $kx$ denotes complex multiplication of $k$ with $x=x_1+ix_2$. These exponentially growing solutions determine the scattering transform $\bft$ through the integral formula
\begin{equation}
\label{t}
\bft(k) = \int_{\R^2} e_k(x) q(x) m(x,k) \, dx
\end{equation}
where $m(x,k) = e^{-ikx} \psi(x,k)$ and $e_k(x) = \exp \left(i\left(kx + \kbar \xbar\right)\right)$. 
Given $\bft$ one can recover $m(x,k)$ (and hence $m(x,0) = \sqrt{\sigma(x)}$) by solving the $\dbar$ problem 
\begin{equation}
\label{N.dbar}
\begin{aligned}
\dbar_k m(x,k)	&=	\frac{\bft(k)}{4\pi \kbar} \, e_{-k}(x) \overline{m(x,k)}\\
\lim_{|k| \to \infty}	\left( m(x,k) - 1 \right)	&-	0
\end{aligned}
\end{equation}
where $\dbar_k = (1/2)\left( \dee_{k_1} + i \dee_{k_2}\right)$. 

The key point is that the scattering transform of $q$ can be determined directly from the Dirichlet-to-Neumann operator. Because $q$ has compact support, one can reduce \eqref{CGO} and \eqref{t} to the boundary integral equations
\begin{align}
\label{N.bdryint}	
\left. \psi \right|_{\dee \Omega}	
	&=	 \left.e^{ikx}\right|_{\dee \Omega}	- S_k \left(\Lam_q - \Lam_0 \right) 
								\left( 
										\left. \psi \right|_{\dee \Omega} 
								\right). \\
\label{N.bdry.t}
\bft(k)	&=	\int_{\dee \Omega} e^{i \kbar \xbar} 
								\left( \Lam_q - \Lam_0 \right) 
								\left( 
										\left. \psi \right|_{\dee \Omega} 
								\right) \, ds
\end{align}
Here $S_k$ is convolution with the Fadeev Green's function on $\dee \Omega$, an integral operator described in Section \ref{sec:prelim}.  The operator $\Lam_q$ is the Dirichlet to Neumann operator for the \emph{Schr\"{o}dinger} problem
\begin{equation}
\label{svp}
\begin{aligned}
(-\Delta + q) \psi	&=	0,	\\
\left. \psi \right|_{\dee \Omega} &= f.
\end{aligned}
\end{equation}
The boundary integral equations \eqref{N.bdryint} were first introduced by R.\ Novikov \cite{Novikov88}.
The operator $\Lambda_0$ is the Dirichlet-to-Neumann operator for harmonic functions on $\Omega$, corresponding to $q(x) \equiv 0$ and $\sig(x) \equiv 1$. Given $\bft$, one can then solve the $\dbar$-problem \eqref{N.dbar} and recover $\sigma$ from 
$$ \sigma(x) = m(x,0)^2. $$

In this paper we show that the integral equations \eqref{N.bdryint} are still uniquely solvable for the Dirichlet-to-Neumann operator of a positive, essentially bounded conductivity with strictly positive essential lower bound. Moreover, we identify the resulting scattering transform as a natural analogue of Nachman's scattering transform which is, in fact, a limit of scattering transforms obtained through monotone approximation by smooth functions. A key ingredient in our analysis is the Beltrami equation of Astala-P\"{a}iv\"{a}rinta and the associated scattering transform, which provides a way of identifying the `scattering transform' that arises from the limit of Nachman's equations.

To describe our results, we first recall a standard reduction due to Nachman \cite[Section 6]{Nachman96}.  Without loss, we may  assume that $\Omega$ is the unit disc $\D$ and that $\sigma(x) \equiv 1$ in a neighborhood of $\D$. We make the second assumption more precise:
\begin{equation}
\label{sigma.one}
\text{There is an }r_1 \in (0,1) \text{ so that }\sigma(x)=1\text{ for }|x| \geq r_1. 
\end{equation}

Next we describe the Astala-P\"{a}iv\"{a}rinta scattering transform which provides the context in which our convergence result can be understood. Given a positive conductivity $\sigma$ with $\sigma(x) \geq c> 0$ a.e., the Beltrami coefficient associated to $\sigma$ is $\mu =  (1-\sig)/(1+\sig)$ and satisfies $|\mu(x)| \leq \kappa < 1$. Moreover, $\mu$ has compact support since $\sig(x)=1$ outside a compact set. For any real solution $u \in H^1(\D)$ of \eqref{bvp}, there exists a real-valued function $v \in H^1(\D)$, called the $\sigma$-harmonic conjugate of $u$, so that $f=u+iv$ solves the Beltrami equation
\begin{equation}
\label{Beltrami}
\dbar f = \mu \dee f
\end{equation}
where 
$\dee =  (1/2)\left( \dee_{x_1} - i \dee_{x_2} \right)$ and 
$\dbar = (1/2)\left( \dee_{x_1} + i \dee_{x_2} \right)$ 
are the operators of differentiation with respect to $x$ and $\xbar$.
Astala and P\"{a}iv\"{a}rinta show that the Beltrami equation \eqref{Beltrami} admits CGO solutions which define a scattering transform analogous to $\bft$ which remains well-defined under the weaker assumption that $\mu \in L^\infty(\Omega)$.

\begin{theorem}  \cite[Theorem 4.2]{AP06a}
Let $\mu \in L^\infty(\D)$ with $ \norm[\infty]{\mu} \leq \kappa < 1$. For each $k \in \C$ and each
$p \in (2,1+\kappa^{-1})$, there exists a unique solution $f_\mu \in W^{1,p}_{\loc}(\R^2)$ 
of \eqref{Beltrami} of the form $f_\mu = e^{ikx} M_\mu(x,k)$ where $M_\mu(x,k) -1 \in W^{1,p}(\R^2)$. 
\end{theorem}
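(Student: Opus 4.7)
The plan is to substitute the CGO ansatz $f_\mu = e^{ikx} M_\mu$ into the Beltrami equation \eqref{Beltrami} and reduce the problem to a linear integral equation on $L^p(\R^2)$ controlled by the $L^p$ theory of the Beurling transform.

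Since $e^{ikx}$ is holomorphic in $x$, $\dbar e^{ikx} = 0$ and $\dee e^{ikx} = ik \, e^{ikx}$. Substituting into \eqref{Beltrami} and cancelling $e^{ikx}$ yields the pseudoanalytic equation $\dbar M - \mu \dee M = ik\mu M$. Writing $M = 1 + m$ with $m \in W^{1,p}(\R^2)$ and using that $\mu$ has compact support, the quantity $g := \dbar m$ is compactly supported and satisfies
\[
(I - \mu S)\, g = ik\mu\, (1 + Cg),
\]
where $C$ denotes the Cauchy transform and $S$ the Beurling (Ahlfors) transform, related by $C\dbar = I$ on functions vanishing at infinity and $\dee C = S$. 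For $p > 2$ and $g$ compactly supported in $L^p$, the Cauchy transform $Cg$ is continuous, bounded, and of order $|z|^{-1}$ at infinity, so $m = Cg$ indeed lies in $W^{1,p}(\R^2)$ and every term in the displayed equation sits in $L^p(\R^2)$.

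The central analytic step is to invert $I - \mu S$ on $L^p(\R^2)$. The Beurling transform satisfies $\norm[L^2 \to L^2]{S} = 1$, and $\norm[L^p \to L^p]{S}$ depends continuously on $p$ with the Astala--Iwaniec--Saksman estimate $\norm[L^p \to L^p]{S} \leq p-1$ for $p \geq 2$. Combined with $\norm[\infty]{\mu} \leq \kappa$ this gives $\norm[L^p \to L^p]{\mu S} \leq \kappa(p-1) < 1$ exactly when $p < 1 + \kappa^{-1}$, so $(I - \mu S)^{-1}$ is defined on $L^p$ by a Neumann series for every $p$ in the stated range. Rewriting the equation as
\[
g = ik\, (I - \mu S)^{-1}\mu + ik\, (I - \mu S)^{-1}(\mu\, Cg),
\]
the map $g \mapsto (I - \mu S)^{-1}(\mu\, Cg)$ is compact on $L^p(\R^2)$, since $\mu C$ factors through a Rellich--Kondrachov compact embedding on any ball containing $\operatorname{supp} \mu$. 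Existence therefore reduces to uniqueness by the Fredholm alternative.

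Uniqueness of the homogeneous problem $\dbar m - \mu \dee m = ik\mu m$ with $m \in W^{1,p}(\R^2)$ is the hardest ingredient and the main obstacle. I would translate it back to a statement about $f = e^{ikx}(1+m)$, which is a $W^{1,p}_{\loc}(\R^2)$ solution of $\dbar f = \mu \dee f$ with prescribed CGO asymptotics, and then apply the Stoilow factorization $f = F \circ \chi$, where $\chi \colon \C \to \C$ is a principal quasiconformal homeomorphism with Beltrami coefficient $\mu$ and $F$ is entire. The CGO asymptotics $M_\mu \to 1$ at infinity combined with a Liouville-type growth argument for pseudoanalytic functions then force $m \equiv 0$. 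The sharp range $p \in (2, 1+\kappa^{-1})$ ultimately reflects both the Astala--Iwaniec--Saksman $L^p$ bound on $S$ and this delicate asymptotic analysis at infinity.
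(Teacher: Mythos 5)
The paper never proves this theorem --- it is quoted verbatim from Astala--P\"{a}iv\"{a}rinta --- so there is no internal proof to compare against; for what it is worth, your architecture (reduce to a singular integral equation, invert the Beltrami operator on $L^p$ in the critical interval, apply the Fredholm alternative, and get uniqueness from Stoilow factorization plus a Liouville argument) is essentially the one Astala--P\"{a}iv\"{a}rinta use. But your central step has a genuine gap: the bound $\norm[L^p \to L^p]{S} \leq p-1$ for $p \geq 2$ that you attribute to Astala--Iwaniec--Saksman is the \emph{Iwaniec conjecture} and remains open; it is not available as a tool, and consequently the Neumann series for $(I-\mu S)^{-1}$ does not converge by any known norm estimate once $p$ is well inside $(2,1+\kappa^{-1})$. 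What Astala--Iwaniec--Saksman actually proved (Duke Math.\ J.\ 107 (2001)) is precisely the conclusion you need --- invertibility of $I-\mu S$, and of more general real-linear Beltrami operators, on $L^p(\C)$ for $p \in (1+\kappa, 1+\kappa^{-1})$ --- but by a holomorphic-deformation and weighted-norm argument, not by summing a geometric series. Citing their invertibility theorem closes the gap; the norm bound as stated cannot.

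A second, lesser issue is that you have proved a theorem about the wrong equation. Although \eqref{Beltrami} is printed as $\dbar f = \mu\, \dee f$, the result being quoted (and every subsequent use in this paper, e.g.\ \eqref{Msharp.pde} and the proof of Lemma \ref{lemma:v_n.est}) concerns the real-linear equation $\dbar f = \mu\, \overline{\dee f}$, which is the one generated by a conductivity $\sigma$ and its $\sigma$-harmonic conjugate. Substituting $f = e^{ikx}M$ there produces an equation in which $\overline{\dee M}$ and $\overline{M}$ appear with an oscillating factor $e_{-k}$, not the complex-linear identity $\dbar M - \mu \dee M = ik\mu M$; the operator to invert is the real-linear $g \mapsto g - \mu e_{-k}\overline{Sg}$, which is again covered by the Astala--Iwaniec--Saksman theorem. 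Your uniqueness plan survives this change, since the difference of two solutions of the real-linear equation still satisfies the distortion inequality $|\dbar g| \leq \kappa |\dee g|$ and is therefore quasiregular, so Stoilow factorization applies; but note that this final Liouville-type step for $F \circ \chi$ against the exponential CGO weight is where Astala--P\"{a}iv\"{a}rinta spend most of their effort, and your sketch of it is a statement of intent rather than a proof.
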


We refer to $M_\mu$ as the normalized CGO solution of \eqref{Beltrami} and denote by $M_{\pm \mu}$ the normalized solutions corresponding to $\mu$ and $-\mu$. The associated scattering transform $\tau_\mu$ is given by
\begin{equation}
\label{tau}
\overline{\tau_\mu(k)} = \frac{1}{2\pi} \int \dbar_x \left(M_\mu(x,k) - M_{-\mu}(x,k)\right) \, dx
\end{equation}
If conductivities are smooth, one has \cite{APRS14}
\begin{equation}
\label{tau-to-t}
\bft(k) = -4\pi i \overline{k}\tau_\mu(k).
\end{equation}

Our first result concerns solvability of the Nachman integral equations for a non-smooth conductivity. Observe that, under our assumption \eqref{sigma.one}, a solution $\psi$ of 
\eqref{svp} generates a solution of the boundary value problem \eqref{bvp} via $u(x) = \sigma(x)^{-1/2} \psi(x)$, and the Dirichlet-to-Neumann operators for \eqref{svp} and \eqref{bvp} are in fact \emph{identical}. Thus, under the assumption \eqref{sigma.one}, we can recast \eqref{N.bdryint} and \eqref{N.bdry.t} in terms of the Dirichlet-to-Neumann operators for the original conductivity problem, taking $\Omega$ to be the unit disc $\D$.
\begin{align}
\label{N.bdryint.bis}
\left. \psi \right|_{\bD}	
	&=	\left.e^{ikx}\right|_{\bD} - S_k\left(\Lam_\sig - \Lam_1\right)\left( \left. \psi \right|_{\bD} \right),\\
\label{N.bdry.t.bis}
\bft(k)	
	&=	\int_{\bD} e^{i\kbar \xbar} \left(\Lam_\sig - \Lam_1\right) \left( \left. \psi \right|_{\bD} \right) \, ds
\end{align}
where by abuse of notation we write $\Lam_1$, the Dirichlet-to-Neumann operator corresponding to  \eqref{bvp} with $\sig(x)=1$, instead of $\Lam_0$, the Dirichlet-to-Neumann operator corresponding to \eqref{svp} with $q=0$, which are the same thing. Our first result is that \eqref{N.bdryint.bis} is uniquely solvable for $\sigma \in L^\infty$ with strictly positive essential infimum and any $k$.

\begin{theorem}
\label{thm:N.solve} Let $\sig \in L^\infty(\D)$ with $\sig(x) \geq c$ for a fixed $c>0$, and suppose that \eqref{sigma.one} holds. For each $k \in \C$, there exists a unique $g \in H^{1/2}(\bD)$ so that
$$ g = e^{ikx} - S_k\left(\Lam_\sig - \Lam_1\right) g. $$
\end{theorem}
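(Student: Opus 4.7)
The plan is to apply the Fredholm alternative on $H^{1/2}(\bD)$ to the operator $I + K_\sig$, where $K_\sig := S_k(\Lam_\sig - \Lam_1)$. The two ingredients needed are: (i) compactness of $K_\sig$ on $H^{1/2}(\bD)$, and (ii) injectivity of $I + K_\sig$. Once both are in hand, the equation $(I + K_\sig) g = \left. e^{ikx} \right|_{\bD}$ has a unique solution.

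Compactness is the easier half. The difference $\Lam_\sig - \Lam_1$ is bounded from $H^{1/2}(\bD)$ to $H^{-1/2}(\bD)$ for any uniformly elliptic $\sig \in L^\infty$, but \eqref{sigma.one} lets one do better: since $\sig \equiv 1$ on an annular neighborhood of $\bD$, the coefficient difference is supported strictly inside the disc, and additional boundary smoothing appears via Poisson extension across this conductivity--free annulus. The Faddeev Green's function $G_k(x-y)$ has the same logarithmic leading singularity as the Newtonian kernel, so the boundary single--layer operator $S_k$ is bounded from $H^{-1/2}(\bD)$ to $H^{1/2}(\bD)$ with the standard gain of one derivative on the smooth curve $\bD$. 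Combining these observations, $K_\sig$ maps $H^{1/2}(\bD)$ into $H^s(\bD)$ for some $s > 1/2$, and Rellich compactness of the inclusion $H^s(\bD) \hookrightarrow H^{1/2}(\bD)$ yields compactness of $K_\sig$.

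The main obstacle is injectivity, because Nachman's original argument leans on the Schr\"odinger potential $q = \Delta\sqrt\sig / \sqrt\sig$ and on uniqueness of CGO solutions of $(-\Delta+q)\psi = 0$, neither of which makes direct sense for $\sig \in L^\infty$. I would route around this through the \AP theory of the Beltrami equation. Suppose $g \in H^{1/2}(\bD)$ solves the homogeneous equation $g + S_k(\Lam_\sig-\Lam_1)g = 0$, and set $\phi := (\Lam_\sig - \Lam_1) g \in H^{-1/2}(\bD)$. Define $\Psi$ on $\R^2$ piecewise: inside $\D$ let $\Psi := \sqrt{\sig}\, u$, where $u \in H^1(\D)$ is the Lax--Milgram solution of $\nabla\cdot(\sig\nabla u) = 0$ with $u|_{\bD} = g$; outside $\overline{\D}$ set $\Psi(x) := -\int_{\bD} G_k(x-y)\,\phi(y)\, ds(y)$. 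The relation $g = -S_k\phi$ forces continuity of $\Psi$ across $\bD$, and the standard jump relations for single--layer potentials (together with $\sig \equiv 1$ near $\bD$, which identifies the interior conormal trace with $\Lam_\sig g = \phi + \Lam_1 g$) match the conormal traces, so $\Psi \in H^1_{\loc}(\R^2)$.

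To close, set $u := \Psi/\sqrt\sig$ on all of $\R^2$ with $\sig$ extended by $1$, form its $\sig$-harmonic conjugate $v$, and let $f := u + iv \in W^{1,p}_{\loc}(\R^2)$. Then $f$ solves the Beltrami equation $\dbar f = \mu\, \dee f$ with $\mu = (1-\sig)/(1+\sig)$. Because the exterior representation of $\Psi$ lacks the usual $e^{ikx}$ leading term, $f$ has CGO form $f = e^{ikx} M(x,k)$ with $M \in W^{1,p}(\R^2)$; that is, it is a normalized CGO solution of \eqref{Beltrami} with the constant $1$ in the \AP normalization replaced by $0$. A linearity argument using the uniqueness clause of the \AP theorem quoted above (adding $f$ to any genuine CGO solution $e^{ikx}(1 + M_\mu)$ would produce a second solution violating uniqueness) forces $f \equiv 0$, and hence $g = u|_{\bD} = 0$. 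This establishes injectivity of $I + K_\sig$ and completes the Fredholm argument.
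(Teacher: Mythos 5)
Your compactness half is exactly the paper's argument: assumption \eqref{sigma.one} makes $\Lam_\sig-\Lam_1$ smoothing from $H^{-m}(\bD)$ to $H^{m}(\bD)$ (this is Lemmas \ref{lemma:D.to.N.id}--\ref{lemma:DN.map.smoothing}, via Alessandrini's identity and the interior estimate across the annulus where $\sig\equiv 1$), $S_k$ gains one derivative by \eqref{Sk.map}, and Rellich gives compactness. The gluing construction for injectivity --- interior conductivity solution with trace $g$, exterior single layer $-S_k\phi$, matched traces and conormal derivatives via the jump relations --- is also the paper's. The divergence, and the gap, is in the last step.

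Two concrete problems there. First, the intermediate object $\Psi=\sqrt{\sig}\,u$ is not in $H^1_{\loc}$ for a merely $L^\infty$ conductivity ($\nabla\sqrt{\sig}$ need not exist), so the assertion $\Psi\in H^1_{\loc}(\R^2)$ is false as written; since you divide by $\sqrt{\sig}$ immediately afterwards this is repairable --- glue $u$ itself, as the paper does, and never introduce $\Psi$. Second, and more seriously, the passage to the Beltrami equation is not justified. The $\sigma$-harmonic conjugate is constructed for \emph{real} solutions, but your glued $u$ is complex-valued (the exterior piece involves the complex kernel $G_k$), so you must split into real and imaginary parts and track two Beltrami solutions; and to invoke the uniqueness clause of the \AP theorem you must show that the resulting $f$ equals $e^{ikx}M$ with $M\in W^{1,p}(\R^2)$ \emph{globally}, i.e.\ you need $L^p(\R^2)$ control of $e^{-ikx}f$ and its gradient at infinity, including the contribution of the conjugate $v$, which is determined only up to a constant and whose decay is not free. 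None of this is verified, and it is the crux of your injectivity argument. The paper sidesteps all of it by citing a uniqueness theorem for the conductivity equation itself, Theorem \ref{thm:cond.unique} (\cite[Corollary 18.1.2]{AIM09}), which applies to complex-valued $W^{1,2}_{\loc}$ solutions and requires only the pointwise decay $e^{-ikx}u(x)\to 0$ as $|x|\to\infty$, immediate from the decay of the Faddeev kernel $g_k$. Either supply the realness reduction and the global $W^{1,p}$ normalization, or replace your final paragraph by an appeal to that theorem.
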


As we will see, \eqref{sigma.one} implies that $\Lam_\sig - \Lam_1$ is smoothing even though $\sig$ may be nonsmooth. One can then mimic Nachman's original argument from Fredholm theory to prove the unique solvability. We will show that the ``scattering transform'' generated by \eqref{N.bdry.t.bis} is a natural limit of smooth approximations, and remains related to the Astala-P\"{a}iv\"{a}rinta scattering transform $\tau$ by \eqref{tau-to-t}, even though the Schr\"{o}dinger problem now involves a distribution potential. 

To make this connection, we consider approximation of $\sig \in L^\infty$ by smooth conductivities. In particular, suppose that $\sig$ is a fixed conductivity obeying \eqref{sigma.one} with strictly positive essential infimum, and that $\{\sig_n\}$ is a sequence of smooth conductivities in $\D$ obeying
\begin{itemize}
\item[(i)]		There is a fixed $r_1 \in (0,1)$ so that $\sig_n(x) =1$ for $|x| \geq r_1$ and for all $n$, 
\item[(ii)]	There is a fixed $c>0$ so that $\sig_n(x) \geq c$ for a.e.\ $x \in \D$ and for all $n$,
\item[(iii)]	For a.e.\  $x$, $\sig_n(x)$ is monotone nondecreasing with $\sig_n(x) \to \sig(x)$ 
				as $n \to \infty$. 
\end{itemize}

\begin{theorem}
\label{thm:N.approx}
Suppose that $\{ \sig_n \}$ obeys (i)--(iii), and denote by $\bft_n$ (resp.\ $\bft$) the scattering transform for $\sig_n$ (resp.\ $\sig$) obtained from \eqref{N.bdryint.bis}--\eqref{N.bdry.t.bis}. Then $\bft_n \to \bft$ pointwise. Moreover, $\bft$ is related to the Astala-P\"{a}iv\"{a}rinta scattering transform $\tau$ for $\sig$ by \eqref{tau-to-t}.
\end{theorem}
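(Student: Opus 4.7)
The plan is to leverage three monotonicity/continuity facts: norm convergence $\Lam_{\sig_n} \to \Lam_\sig$ of the Dirichlet-to-Neumann maps under the approximation (i)--(iii); uniform compactness of $S_k(\Lam_{\sig_n}-\Lam_1)$ on $H^{1/2}(\bD)$ coming from the cutoff \eqref{sigma.one}; and continuity of the \AP scattering transform in $\mu$. Together with Theorem \ref{thm:N.solve} applied to the limiting $\sig$, these ingredients produce convergence of the boundary traces $g_n \to g$, and pointwise convergence $\bft_n(k) \to \bft(k)$ then follows from \eqref{N.bdry.t.bis}. The identification $\bft = -4\pi i \kbar \tau_\mu$ is obtained by applying \eqref{tau-to-t} at each smooth $\sig_n$ and passing to the limit on both sides.

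Concretely, from (ii) and (iii) dominated convergence gives $\sig_n \to \sig$ in every $L^p(\D)$. A standard variational argument via the characterization $\langle \Lam_\sig f, f\rangle = \inf_{u|_{\bD}=f} \int_\D \sig |\nabla u|^2$ then yields norm convergence $\Lam_{\sig_n} \to \Lam_\sig$ in $\mathcal{L}(H^{1/2}(\bD), H^{-1/2}(\bD))$. Hypothesis \eqref{sigma.one} together with (i) means that $\sig_n = \sig = 1$ on the annulus $r_1 < |x| < 1$, so any solution of \eqref{bvp} for either coefficient is harmonic there. Writing $(\Lam_{\sig_n}-\Lam_1)f$ as the difference of two Neumann traces of functions harmonic in that annulus shows that $\Lam_{\sig_n}-\Lam_1$ maps $H^{-1/2}(\bD)$ into $H^s(\bD)$ for every $s$, with bounds depending only on $r_1$ and the common ellipticity constants. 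Combined with the mapping properties of $S_k$, this yields compactness of $S_k(\Lam_{\sig_n}-\Lam_1)$ on $H^{1/2}(\bD)$ uniformly in $n$. Since the limit $I + S_k(\Lam_\sig - \Lam_1)$ is invertible by Theorem \ref{thm:N.solve} and the sequence converges to it in operator norm while being collectively compact, a Fredholm-perturbation argument delivers uniform invertibility of $I + S_k(\Lam_{\sig_n}-\Lam_1)$ for $n$ large together with norm convergence of the inverses.

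Consequently the boundary traces $g_n = (I+S_k(\Lam_{\sig_n}-\Lam_1))^{-1}(e^{ikx}|_{\bD})$ converge to $g$ in $H^{1/2}(\bD)$, and combining with norm convergence of $\Lam_{\sig_n}-\Lam_1$ in \eqref{N.bdry.t.bis} gives $\bft_n(k) \to \bft(k)$. For the identification with $\tau_\mu$, each smooth $\sig_n$ falls under Nachman's original hypotheses, so that $\bft_n(k) = -4\pi i \kbar \tau_{\mu_n}(k)$ by \eqref{tau-to-t}. The Beltrami coefficients $\mu_n = (1-\sig_n)/(1+\sig_n)$ satisfy $\norm[\infty]{\mu_n} \leq \kappa < 1$ uniformly and converge to $\mu$ pointwise a.e.\ with common compact support. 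The contractive fixed-point structure of the \AP equations for $M_{\pm \mu}(\cdot,k) - 1$ in $W^{1,p}(\R^2)$ then yields $M_{\pm \mu_n}(\cdot,k) \to M_{\pm \mu}(\cdot,k)$ in $W^{1,p}(\R^2)$, and \eqref{tau} gives $\tau_{\mu_n}(k) \to \tau_\mu(k)$. Passing to the limit in $\bft_n(k) = -4\pi i \kbar \tau_{\mu_n}(k)$ completes the identification. The principal technical obstacle is precisely this uniformity: Theorem \ref{thm:N.solve} yields individual invertibility but not a uniform-in-$n$ bound on $(I + S_k(\Lam_{\sig_n}-\Lam_1))^{-1}$, and capturing it requires the smoothing afforded by \eqref{sigma.one} to be genuinely uniform along the sequence, which is exactly why a fixed radius $r_1$ is imposed in hypothesis (i).
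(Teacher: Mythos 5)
Your outline of the first half is close to the paper's, but the logic is assembled in the wrong order and one step is misattributed. The ``standard variational argument'' from $\sig_n \to \sig$ in $L^p$ does \emph{not} yield norm convergence of $\Lam_{\sig_n}$ to $\Lam_\sig$ in $\calL(H^{1/2},H^{-1/2})$: monotone pointwise convergence of the coefficients gives convergence of the quadratic forms $\langle f, \Lam_{\sig_n} f\rangle$ for each fixed $f$ (hence weak operator convergence, by polarization and nonnegativity of $\Lam_\sig - \Lam_{\sig_n}$), but uniformity over the unit ball of $H^{1/2}$ fails in general because $\|\sig - \sig_n\|_\infty$ need not tend to zero. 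The upgrade from weak to norm convergence is exactly where the uniform smoothing bound $\Lam_{\sig_n}-\Lam_1 : H^{-m} \to H^{m}$ (uniform in $n$, thanks to the fixed $r_1$ in (i)) must enter: the paper sandwiches $A_n - A$ between finite-rank projections $P_k$ and controls the tails $(I-P_k)A_n$, $A_n(I-P_k)$ uniformly in $n$. You have this uniform smoothing in hand but deploy it only for ``collective compactness'' and uniform invertibility of $I+T_n$ --- which, once norm convergence of $T_n$ is actually established, is automatic from a Neumann-series perturbation and needs no collective compactness at all. So the real technical obstacle is not the one you name at the end; it is the weak-to-norm upgrade itself.

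The more serious gap is in the identification $\bft = -4\pi i \kbar \tau_\mu$. You assert that ``the contractive fixed-point structure of the \AP equations'' yields $M_{\pm\mu_n}(\cdot,k) \to M_{\pm\mu}(\cdot,k)$ in $W^{1,p}(\R^2)$. There is no such contraction available: the CGO solutions of the Beltrami equation are not produced by a Neumann series whose convergence would be uniform in $\mu$ with a rate controlled by $\|\mu_n - \mu\|$, and in any case $\mu_n \to \mu$ only pointwise a.e., not in $L^\infty$, so even a hypothetical contraction with $\mu$-dependent data would not give continuity of the fixed point in the topology you need. Establishing convergence of $M_{\pm\mu_n}$ is the main analytic content of the paper's Section 4 (Proposition \ref{prop:Beltrami}): one first proves a uniform $W^{1,p}$ bound on $M_{\mu_n}-1$ by a contradiction argument combining the Caccioppoli estimate of Theorem \ref{thm.aim.caccio} with Rellich--Kondrachov and the uniqueness of normalized CGO solutions, then extracts a weak limit and identifies it, again via uniqueness, as $M_\mu$; only \emph{weak} $W^{1,p}$ convergence is obtained, which suffices for passing to the limit in \eqref{tau} because the integrand $\dbar_x(M_{\mu_n}-M_{-\mu_n})$ is supported in $\D$. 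Your proposal replaces this entire mechanism with an appeal to a structure that the problem does not have, so the identification step is not proved.
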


We will prove Theorem \ref{thm:N.approx} by studying convergence of the operators $\Lam_{\sig_n} - \Lam_1$ to $\Lam_\sig - \Lam_1$ as $n \to \infty$. An important ingredient in the proof will be the fact that the operators $\Lam_{\sig_n} - \Lam_1$ are \emph{uniformly} compact in a sense to be made precise, so that weak convergence (which is relatively easy to prove) can be ``upgraded'' to norm convergence. 

It is then natural to ask, whether, on the other hand, a sequence of cutoff scattering transforms converging to the ``true'' scattering transform of a singular conductivity produces a convergent reconstruction. This question is much harder because the truncated scattering transforms, by analogy with the Fourier transform, generate approximate conductivites that are \emph{not} identically $1$ outside a compact set. This means that the analysis of Astala and P\"{a}iv\"{a}rinta, which exploits the compact support of the Beltrami coefficient $\mu$, must be considerably extended. We will return to this analysis in a subsequent paper.

\subsection*{Acknowledgements} P.P. and G.L. thank the Department of Mathematics, University of Helsinki, for hospitality during part of the time this work was done.

\section{Preliminaries}
\label{sec:prelim}

Here and in what follows, we use the notation $f \lesssim_{\, c} g$ to mean that $f \leq Cg$ where the implied constant $C$ depends on the quantities $c$. 

\subsection{\texorpdfstring{$H^s$ Spaces, Fourier Basis, Harmonic extensions}{Fourier Basis, Harmonic Extensions}} 
An $L^2$ function $f \in L^2(\bD)$ admits a Fourier series expansion
$f(\theta) \sim \sum_n b_n \varphi_n(\theta)$, where
$$ \varphi_n(\theta) = \frac{1}{\sqrt{2\pi}} e^{in\theta}.$$
The equation
\begin{equation}
\label{Pk}
\left( P_j f \right)(\theta) = \sum_{|n| \leq j} b_n \varphi_n(\theta)
\end{equation}
for $j \in \N$ defines a finite-rank projection.
For $s \in \R$, we denote by $H^s(\bD)$ the completion of $C^\infty(\bD)$ in the norm
$$ \norm[H^s(\bD)]{f} = \left( \sum_{n=-\infty}^\infty (1+|n|)^{2s} |b_n|^2 \right)^{1/2}. $$
It is easy to see that the embedding 
\begin{equation}
\label{Hs.compact}
H^s(\bD) \hookrightarrow H^{s'}(\bD)
\end{equation}
is compact provided $s> s'$. 

The harmonic extension of $f \in L^2(\bD)$ to $\D$ is given by 
$$ u(r,\theta) = \sum_{n=-\infty}^\infty r^{|n|} b_n \varphi_n(\theta). $$
It is easy to see that for any $r_1 \in (0,1)$, the estimate
\begin{equation}
\label{harmonic.in}
\norm[L^2(|x| < r_1)]{u} \lesssim_{\, m, r_1} \norm[H^{-m}]{f}
\end{equation}
holds for the harmonic extension.  

\subsection{\texorpdfstring{Faddeev's Green's Function and the operator $S_k$}{Fadeev's Green's Function}}

The Faddeev Green's function is the convolution kernel $G_k(x-y)$ where
\begin{equation}
\label{Gk}
G_k(x)	=	 \frac{e^{ikx}}{(2\pi)^2} \int_{\R^2} \frac{e^{ix \cdot \xi}}{|\xi|^2 + 2k(\xi_1+i\xi_2)} \ d\xi
\end{equation}
where $x\cdot \xi = x_1\xi_1 +x_2\xi_2$. This is the natural Green's function for the elliptic problem \eqref{CGO}. Writing $G_k(x) = e^{ikx} g_k(x)$ we see that $g_k(x)$ differs from the Green's function $G_0(x) = -(2\pi)^{-1} \log |x|$ of the Laplacian by a function which is smooth and harmonic on all of $\R^2$ and, in particular, is regular at $0$ (see, for example, \cite[Section 3.1]{Siltanen99} for further discussion and estimates). 

In what follows we will assume $\Omega \subset \R^2$ is bounded and simply connected with smooth boundary (since our application is to $\Omega = \D$) even though these assertions are known in greater generality.
In the reduction of \eqref{CGO} to the boundary integral equation \eqref{N.bdryint}, the operator $S_k$ is the corresponding single layer
\begin{equation}
\label{Sk}
\left( S_k f \right)(x)  = \int_{\dee \Omega} G_k(x-y)  f(y) \, dy.
\end{equation}
For $p \in (1,\infty)$ and any $f \in L^p(\dee \Omega)$, the function $S_k f$ is smooth and harmonic on $\R^2 \setminus \dee \Omega$.
Moreover, since the convolution kernel $G_k$ is at most logarithmically singular, $S_k f$ restricts to a well-defined a function on $\dee \Omega$. 
When restricted to $\dee \Omega$, 
\begin{equation}
\label{Sk.map}
S_k : H^{s}(\dee \Omega) \to H^{s+1}(\dee \Omega), \quad s \in [-1,0]
\end{equation}
(see  \cite[Lemma 7.1]{Nachman96}), even if $\Omega$ only has Lipschitz boundary.

It follows from the form of $G_k(x)$ and classical potential theory that, if $\nu(x)$ is the unit normal to $\dee \Omega$ at $x \in \dee \Omega$, the identities
\begin{align}
\label{Sk.out}
\lim_{\substack{ ~\\z \to x \\[2pt] z \in \R^2 \setminus \Omega}}
	\left\langle \nu(x), \left(\nabla S_k f\right)(z) \right\rangle 
		&= -\left(\frac{1}{2}I -S_k\right) f(x)\\
\label{Sk.in}
\lim_{\substack{ ~ \\z \to x \\[2pt] z \in\Omega}}
	\left\langle \nu(x), \left(\nabla S_k f\right)(z) \right\rangle 
		&= -\left(\frac{1}{2}I +S_k \right) f(x)	
\end{align}
hold

\subsection{Alessandrini Identity }
We will make extensive use of the following identity \cite{Alessandrini88} which is an easy consequence of Green's theorem. Suppose that $u$ solves \eqref{bvp} and that $v \in H^1(\Omega)$ with boundary trace $g \in H^{1/2}(\dee \Omega)$. Then
\begin{equation}
\label{Alessandrini}
\left\langle g, \Lam_{\sig} f \right\rangle = \int_\Omega \sig(x) (\nabla u)(x) \cdot (\nabla v)(x) \, dx
\end{equation}
where $\langle g,h \rangle$ denotes the dual pairing of $g \in H^{1/2}(\dee \Omega)$ with $h \in H^{-1/2}(\dee \Omega)$. 

\subsection{A Priori Estimates and Uniqueness Theorems}
We'll need the following results from \cite{AIM09} which we state here for the reader's convenience. First, the following \emph{a priori} estimate on solutions of Beltrami's equation to analyze convergence of CGO solutions to the Beltrami equations assuming that the Beltrami coefficients converge pointwise.

\begin{theorem}\cite[Theorem 5.4.2]{AIM09}
\label{thm.aim.caccio}
Let $f\in W^{1,q}_{loc}(\Omega)$, for some $q \in (q_\kappa, p_\kappa) = (1+\kappa, 1+\frac{1}{\kappa})$, satisfy the distortion inequality
\begin{equation*}
\left| \dbar f\right| \leq \kappa \left|\dee f\right|
\end{equation*}
for almost every $x\in \Omega$.  Then $f\in W^{1, p}_{loc}(\Omega)$ for every $p \in (q_\kappa, p_\kappa)$.  In particular, $f$ is continuous, and for every $s \in (q_\kappa, p_\kappa)$, the critical interval, we have the Caccioppoli estimate
\begin{equation}
\label{Caccioppoli.ineq.}
\norm[s]{\eta \nabla f} \leq C_s(k)\norm[s]{f\nabla \eta}
\end{equation}
whenever $\eta$ is a compactly supported Lipschitz function in $\Omega$.
\end{theorem}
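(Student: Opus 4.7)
The plan is to reduce the local regularity and Caccioppoli estimate to invertibility of $I - S\mu$ on $L^p$ via the Beurling--Ahlfors transform, where the critical interval is exactly the range of $p$ for which that operator is contractive.

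First I would localize. Choose a cutoff $\eta \in \mathrm{Lip}_c(\Omega)$ and set $\phi = \eta f$, which is compactly supported in $\R^2$ and obeys
\begin{equation*}
\dbar \phi = \eta \dbar f + f \dbar \eta = \mu\,\eta\,\dee f + f\,\dbar \eta = \mu \dee \phi + (f\,\dbar \eta - \mu f\,\dee \eta).
\end{equation*}
Because $\phi$ has compact support, the Beurling--Ahlfors transform $S$, defined by $S(\dbar g) = \dee g$ on test functions, applies directly and yields $\dee \phi = S(\dbar \phi)$. Substituting the identity above gives the functional equation
\begin{equation*}
(I - S\mu)(\dee \phi) = S\bigl(f\,\dbar \eta - \mu f\,\dee \eta\bigr).
\end{equation*}

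Next I would invoke Astala's area-distortion theorem (the heart of the matter), which says that for $\|\mu\|_\infty \leq \kappa < 1$ the operator $S\mu$ acting by pointwise multiplication followed by $S$ is a strict contraction on $L^p(\R^2)$ \emph{exactly} for $p$ in the critical interval $(q_\kappa, p_\kappa) = (1+\kappa, 1+1/\kappa)$; $S$ itself is bounded on every $L^p$, $1<p<\infty$, by Calder\'on--Zygmund theory. Hence $I - S\mu$ is invertible on $L^p$ for each such $p$, with operator norm bounded by a constant $C_s(k)$ depending only on $\kappa$ and $s$. Applying $(I-S\mu)^{-1}$ and the boundedness of $S$ to the displayed equation gives
\begin{equation*}
\|\dee \phi\|_s \lesssim_{\,\kappa,s} \|f \nabla \eta\|_s,
\end{equation*}
and the analogous bound for $\dbar \phi$ follows from the distortion inequality. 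Writing $\eta \nabla f = \nabla \phi - f \nabla \eta$ then yields the Caccioppoli estimate \eqref{Caccioppoli.ineq.}.

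For the regularity improvement, I would bootstrap. Given $f \in W^{1,q}_{\loc}$ with $q \in (q_\kappa, p_\kappa)$, Sobolev embedding in the plane places $f$ locally in $L^r$ for some $r$ strictly larger than $q$; hence the right-hand side $S(f\,\dbar\eta - \mu f\,\dee\eta)$ lies in $L^p$ for every $p$ up to that $r$. Using the functional equation with cutoffs supported on a shrinking nested family of subdomains and inverting $I - S\mu$ on the larger $L^p$ spaces inside the critical interval, one upgrades $\dee\phi$ and hence $\nabla f$ to $L^p_{\loc}$. Iterating, or taking $p$ directly up to any value below $p_\kappa$, gives $f \in W^{1,p}_{\loc}$ for every $p$ in the critical interval. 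Continuity then follows from the Sobolev embedding $W^{1,p}_{\loc} \hookrightarrow C^{0,\alpha}_{\loc}$ for any $p > 2$, which is available because $p_\kappa = 1 + 1/\kappa > 2$ when $\kappa < 1$.

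The main obstacle is the sharp $L^p$-contractivity of $S\mu$ on precisely the critical interval; this is Astala's deep theorem based on holomorphic deformations of quasiconformal maps and the area-distortion estimate. The rest of the argument — cutoff, Beurling substitution, Caccioppoli, Sobolev bootstrap — is essentially formal once that operator-theoretic input is granted.
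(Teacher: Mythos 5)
This statement is quoted verbatim from \cite[Theorem 5.4.2]{AIM09}; the paper gives no proof of it, so the only thing to assess is whether your sketch would actually establish the result. Your reduction — localize with $\eta$, write $\dbar f=\mu\,\dee f$ with $|\mu|\le\kappa$, apply the Beurling transform $S$ to get $(I-S\mu)(\dee(\eta f))=S(f\,\dbar\eta-\mu f\,\dee\eta)$, then invert and bootstrap — is indeed the standard route. But the key input is misstated in a way that breaks the proof: it is \emph{not} known that $S\mu$ is a strict contraction on $L^p(\R^2)$ for all $|\mu|\le\kappa$ and all $p$ in the critical interval $(1+\kappa,1+1/\kappa)$. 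That claim is equivalent to the bound $\norm[L^p\to L^p]{S}\le p^*-1$, i.e.\ to Iwaniec's conjecture on the norm of the Beurling transform, which remains open. Astala's area-distortion theorem does not give contractivity of $S\mu$, and no Neumann-series argument is available.

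What is true, and what your argument actually needs, is the Astala--Iwaniec--Saksman theorem: $I-\mu S$ (equivalently $I-S\mu$) is \emph{invertible} on $L^p(\R^2)$ for every $p$ in the critical interval, with the norm of the inverse controlled by $\kappa$ and $p$ alone. Its proof goes through holomorphic deformation of quasiconformal maps and the area-distortion estimate, not through an operator-norm bound on $S\mu$; this is the content of Chapter 14 of \cite{AIM09}. Since the remainder of your sketch (the identity $\eta\nabla f=\nabla(\eta f)-f\nabla\eta$, the distortion inequality to control $\dbar(\eta f)$, the Sobolev/Rellich bootstrap on nested subdomains, and continuity from $W^{1,p}_{\loc}$ with $p>2$) uses only invertibility with a uniform bound, substituting that theorem for the false contractivity claim repairs the argument. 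One further ordering point: to apply $(I-S\mu)^{-1}$ on $L^s$ you must first know $\dee(\eta f)\in L^s$, so the regularity improvement has to be run before the Caccioppoli estimate is asserted for all $s$ in the critical interval, identifying the $L^s$ solution of the functional equation with $\dee(\eta f)$ by injectivity on $L^q\cap L^s$.
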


The following uniqueness theorem for CGO solutions of the conductivity equation will help establish the unique solvability of the integral equation \eqref{N.bdryint.bis}.

\begin{theorem} \cite[Corollary 18.1.2]{AIM09}
\label{thm:cond.unique}
Suppose that $\sigma, 1/\sigma \in L^{\infty}(\D)$ and that $\sigma(x)\equiv 1$ for $|x|\geq 1$.  Then the equation $\nabla \cdot(\sigma \nabla u))=0$
admits a unique weak solution $u \in W^{1,2}_{loc}(\C)$ such that
\begin{equation}
\lim_{|x| \to \infty}  \left( e^{-ikx}u(x,k)-1 \right) = 0.
\end{equation}
\end{theorem}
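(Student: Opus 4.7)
The plan is to reduce both existence and uniqueness to the corresponding statement for the Beltrami equation, namely the AP theorem quoted earlier in this excerpt. Under the hypotheses, the Beltrami coefficient $\mu = (1-\sigma)/(1+\sigma)$ is compactly supported in $\D$ (since $\sigma \equiv 1$ outside) and satisfies $\|\mu\|_\infty \leq \kappa < 1$ for some $\kappa$, so the AP theorem applies and supplies normalized CGO solutions $f_{\pm\mu}(x,k) = e^{ikx} M_{\pm\mu}(x,k)$ with $M_{\pm\mu} - 1 \in W^{1,p}(\R^2)$ for every $p \in (2, 1+\kappa^{-1})$.

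For existence, I would build $u$ out of the Beltrami CGOs. The key fact is that if $f$ solves $\dbar f = \mu \dee f$ with $\mu$ real then $\mathrm{Re}\,f$ solves the conductivity equation with coefficient $\sigma$, while if $g$ solves $\dbar g = -\mu\dee g$ then $\mathrm{Re}\,g$ solves the conductivity equation with coefficient $1/\sigma$. A complex-valued $u$ satisfying $\nabla\cdot(\sigma\nabla u)=0$ with the required normalization $e^{-ikx}u\to 1$ is then obtained as an appropriate real-linear combination of $f_\mu$ and $\overline{f_{-\mu}}$, with phases chosen so that the asymptotic behavior at infinity recombines into a single $e^{ikx}$.

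For uniqueness, take two solutions $u_1, u_2$ satisfying the hypotheses and set $w = u_1 - u_2 \in W^{1,2}_{\loc}(\C)$. Then $w$ solves $\nabla\cdot(\sigma\nabla w)=0$ with $e^{-ikx}w\to 0$. I would decompose $w$ into two real solutions $w_r, w_i$; for each, construct a $\sigma$-harmonic conjugate $v_j$ on all of $\C$ (well-defined and single-valued because $\mu$ has compact support, so each $w_j$ is harmonic outside a disk and the standard complex-analytic construction of the conjugate closes up at infinity) and set $f_j = w_j + iv_j$, which solves $\dbar f_j = \mu \dee f_j$. The decay of $w_j$ propagates, after subtracting the constant limit of $v_j$, to $e^{-ikx}f_j \to 0$. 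The uniqueness clause of the AP theorem, applied with the trivial normalization in place of $1$, then forces $f_j \equiv 0$ and so $w \equiv 0$.

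The main obstacle is not conceptual but technical: upgrading the pointwise decay $e^{-ikx}w\to 0$ to the quantitative $W^{1,p}(\R^2)$ membership of the associated $M$-function required to invoke AP's uniqueness. Two ingredients close the gap. First, the compact support of $\mu$ makes $f_j$ holomorphic outside a large disk, so a standard Laurent-type argument using $e^{-ikx}f_j\to 0$ yields integrability of the gradient on the exterior. Second, the Caccioppoli-type bound of Theorem \ref{thm.aim.caccio} controls the interior $L^p$ norm of $\nabla f_j$ by an $L^p$ norm of $f_j$ itself, which the exterior decay and local $W^{1,2}$ regularity make finite. Combining these gives $M_{f_j} \in W^{1,p}(\R^2)$ and closes the argument.
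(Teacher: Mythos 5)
This theorem is quoted in the paper from \cite[Corollary 18.1.2]{AIM09} and is not proved there, so the relevant comparison is with the standard argument in that reference. Your overall strategy --- converting the conductivity equation into the $\R$-linear Beltrami equation via $\mu=(1-\sigma)/(1+\sigma)$ and invoking the Astala--P\"aiv\"arinta existence and uniqueness theorem for normalized CGO solutions --- is exactly the route taken there, and your existence sketch is essentially correct: the complex CGO solution is $u=\mathrm{Re}\,f_\mu+i\,\mathrm{Im}\,f_{-\mu}$. (Note that the correspondence ``$\mathrm{Re}\,f$ solves the conductivity equation'' holds for the conjugate-linear equation $\dbar f=\mu\,\overline{\dee f}$, which is the equation actually used in \cite{AP06a} and in Section 4 of this paper, not for the $\C$-linear equation as written in \eqref{Beltrami}.)

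The uniqueness half, however, has a genuine gap at precisely the point you flag as ``the main obstacle,'' and the two ingredients you offer do not close it. From the hypotheses you know only that $w=u_1-u_2$ satisfies $e^{-ikx}w\to 0$ pointwise, i.e.\ $|w(x)|=o\left(|e^{ikx}|\right)=o\left(e^{-\mathrm{Im}(kx)}\right)$, a bound that still permits exponential growth of $w$ in half of the directions. Your claim that the $\sigma$-harmonic conjugate $v_j$ inherits this decay ``after subtracting the constant limit of $v_j$'' is unsubstantiated: $v_j$ is recovered by integrating the rotated gradient of $w_j$, and along the critical lines where $|e^{ikx}|\equiv 1$ the available gradient bound only gives $v_j(x)=o(|x|)$, so $v_j$ need not have a limit at all and $e^{-ikx}f_j$ need not tend to zero. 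Relatedly, a holomorphic function on the exterior of the disc is not controlled pointwise by the decay of its real part alone, so the Laurent-series step is circular: it presupposes that $e^{-ikz}f_j$ is bounded near infinity, which is essentially what you are trying to prove. This is exactly where the proof in \cite{AIM09} invokes machinery absent from your sketch: the exponential representation $f_\mu=e^{ik\varphi}$ with $\varphi$ a quasiconformal homeomorphism normalized at infinity, Stoilow factorization, and a Liouville-type theorem for the resulting nonlinear Beltrami equation. Without a substitute for that step, the passage from the weak pointwise normalization $e^{-ikx}u-1\to 0$ to the $W^{1,p}(\R^2)$ normalization required by the Astala--P\"aiv\"arinta uniqueness theorem is not established.
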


\section{Boundary Integral Equation}
\label{sec:bdryint}

In this section we prove Theorem \ref{thm:N.solve}. Our strategy is to show that the integral operator 
$$ T_k \coloneqq S_k (\Lam_\sig-\Lam_1)$$ 
is compact on $H^{1/2}(\bD)$ and then mimic Nachman's argument in \cite[Section 8]{Nachman96} to show that $I + T_k$ is injective. The following simple lemma reduces the compactness statement to interior elliptic estimates plus the property \eqref{harmonic.in} of harmonic extensions.

\begin{lemma}
\label{lemma:D.to.N.id}
For any $f$ and $g$ belonging to $H^{1/2}(\bD)$, the identity
\begin{equation}
\label{D.to.N.id}
\left\langle g, (\Lambda_\sigma - \Lambda_1) f \right\rangle = \int_\D (\sigma -1 ) \nabla v \cdot \nabla u \, dx
\end{equation}
holds, where $u$ solves \eqref{bvp} and $v$ is the harmonic extension of $g$ to $\D$ and $\langle g,h\rangle$ denotes the dual pairing of $g \in H^{1/2}(\bD)$ with $h \in H^{-1/2}(\bD)$.
\end{lemma}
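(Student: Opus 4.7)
The plan is to apply Alessandrini's identity \eqref{Alessandrini} twice, once for $\Lambda_\sigma$ and once for $\Lambda_1$, with the flexibility to choose convenient $H^1$ extensions in each case, and then subtract. The only mildly delicate point is that in the $\Lambda_\sigma$ term we want the $\sigma$-solution $u$ on one side of the inner product, while in the $\Lambda_1$ term we want the harmonic extension $v$ to play the role of the ``solution,'' so that both integrals involve the \emph{same} pair $\nabla u$, $\nabla v$ and we may combine them as $\int_\D (\sigma-1)\nabla u \cdot \nabla v\,dx$.

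First, apply the Alessandrini identity \eqref{Alessandrini} to the pair $(f,g)$ with conductivity $\sigma$. Take $u$ to be the solution of \eqref{bvp} with boundary data $f$, and take $v$ to be the harmonic extension of $g$ to $\D$; note that $v \in H^1(\D)$ with boundary trace $g$, so it is an admissible test function. This yields
\begin{equation*}
\left\langle g, \Lambda_\sigma f \right\rangle = \int_\D \sigma\, \nabla u \cdot \nabla v\,dx.
\end{equation*}
Next I would like to obtain an analogous expression for $\langle g, \Lambda_1 f\rangle$ in which $u$ and $v$ appear in the same roles. For this I use that $\Lambda_1$ is symmetric as an operator from $H^{1/2}(\bD)$ to $H^{-1/2}(\bD)$, so that $\langle g,\Lambda_1 f\rangle = \langle f,\Lambda_1 g\rangle$. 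Now apply \eqref{Alessandrini} with conductivity $1$, with $v$ playing the role of the solution (it is harmonic with boundary trace $g$) and with $u$ itself used as an $H^1$-extension of $f$. This gives
\begin{equation*}
\left\langle g, \Lambda_1 f \right\rangle = \left\langle f, \Lambda_1 g \right\rangle = \int_\D \nabla v \cdot \nabla u\,dx.
\end{equation*}
Subtracting the two identities yields \eqref{D.to.N.id}.

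The main things to check are that all the dual pairings make sense (which follows from the mapping properties of $\Lambda_\sigma$ and $\Lambda_1$ recalled in the introduction, together with $f,g \in H^{1/2}(\bD)$), and that the harmonic extension of a function in $H^{1/2}(\bD)$ lies in $H^1(\D)$ with the correct trace, which is standard. The self-adjointness of $\Lambda_1$ is also classical: it is immediate from Green's theorem applied to two harmonic extensions. Since nothing in the argument requires regularity of $\sigma$ beyond $\sigma \in L^\infty(\D)$ with $\sigma \geq c > 0$, the identity holds in the generality required for Theorem \ref{thm:N.solve}. I do not anticipate a genuine obstacle here; the only subtlety is the bookkeeping of which function plays the role of ``the solution'' on each side.
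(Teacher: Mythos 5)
Your proof is correct, and it reaches the identity by a slightly different decomposition than the paper's. You handle the $\Lambda_1$ term by invoking the symmetry $\langle g,\Lambda_1 f\rangle=\langle f,\Lambda_1 g\rangle$ and then applying Alessandrini with $v$ in the ``solution'' slot and $u$ as the $H^1$ extension of $f$, so that both integrals carry the same pair $\nabla u,\nabla v$ and subtract directly. The paper instead introduces $w$, the harmonic extension of $f$, writes $\langle g,\Lambda_1 f\rangle=\int_\D\nabla v\cdot\nabla w\,dx$, and then disposes of the resulting cross term $\int_\D\nabla v\cdot\nabla(u-w)\,dx$ by Green's theorem, using that $v$ is harmonic and $u-w$ vanishes on $\bD$. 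The two arguments are equivalent in substance — your appeal to the symmetry of $\Lambda_1$ is exactly the same application of Green's theorem, just packaged as a property of the operator rather than as a vanishing cross term — and both exploit the key freedom in Alessandrini's identity, namely that the test function need only be \emph{some} $H^1$ extension of the boundary data. Your version has the small advantage of never introducing $w$; the paper's version keeps every pairing in the form $\langle g,\cdot\,f\rangle$ without permuting arguments. No gap either way.
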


\begin{proof}
Let $	w$ be the harmonic extension of $f$ to $\D$. It follows from Alessandrini's identity \eqref{Alessandrini} that
\begin{align*}
\left\langle g, (\Lambda_\sigma - \Lambda_1) f \right\rangle 
	&=	\int_\D \sigma \nabla v \cdot \nabla u \, dx - \int_\D \nabla v \cdot \nabla w \, dx\\
	&=	\int_\D (\sigma-1) \nabla v \cdot \nabla u \, dx + \int_\D \nabla v \cdot \nabla (u -w ) \, dx
\end{align*}
The second term vanishes since $v$ is harmonic and $ \left. (u-w) \right|_{\bD} = 0$. 
\end{proof}

Next, we note the following interior elliptic estimate.

\begin{lemma}
\label{elliptic.est}
Suppose that $\sigma$ satisfies \eqref{sigma.one}, let $f \in H^{1/2}(\bD)$, and let $u$ denote the unique solution of \eqref{bvp} for the given $f$. For any $m>0$, the estimate
\begin{equation}
\norm[L^2(|x|<r_1)]{\nabla u}\lesssim \norm[H^{-m}(\bD)]{f}
\end{equation}
holds,
where the implied constant depends only on 
$m$, $r_1$, $\essinf \sigma$, and $\esssup \sigma$.
\end{lemma}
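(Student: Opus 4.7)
The plan is to exploit \eqref{sigma.one}: since $\sigma$ differs from $1$ only on $\{|x|\leq r_1\}$, we can reduce the estimate to a bound on the harmonic extension of $f$ on an interior region, where any $H^{-m}$ data becomes smooth.

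First, let $w$ denote the harmonic extension of $f$ to $\D$ and set $\tilde{u}=u-w$. Since $u\big|_{\bD}=w\big|_{\bD}=f$, we have $\tilde{u}\in H^1_0(\D)$. For any test function $\varphi\in H^1_0(\D)$, using that $u$ is a weak solution of \eqref{bvp} and that $\Delta w=0$,
\begin{equation*}
\int_{\D}\sigma\nabla\tilde{u}\cdot\nabla\varphi\,dx
=-\int_{\D}(\sigma-1)\nabla w\cdot\nabla\varphi\,dx.
\end{equation*}
Crucially, $(\sigma-1)$ is supported in $\{|x|\leq r_1\}$ by \eqref{sigma.one}, so the right-hand side only sees $\nabla w$ on that smaller disc.

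Next, test with $\varphi=\tilde{u}$. The lower bound $\sigma\geq c$ gives the coercive estimate
\begin{equation*}
c\,\norm[L^2(\D)]{\nabla\tilde{u}}^2
\leq \norm[\infty]{\sigma-1}\,\norm[L^2(|x|\leq r_1)]{\nabla w}\,\norm[L^2(\D)]{\nabla\tilde{u}},
\end{equation*}
so $\norm[L^2(\D)]{\nabla\tilde{u}}\lesssim \norm[L^2(|x|\leq r_1)]{\nabla w}$, with constant depending only on $\essinf\sigma$ and $\esssup\sigma$. Since $\nabla u=\nabla\tilde{u}+\nabla w$, it suffices to show that $\norm[L^2(|x|\leq r_1)]{\nabla w}\lesssim_{m,r_1}\norm[H^{-m}(\bD)]{f}$ for every $m>0$.

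For this last step I would fix any $r_2\in(r_1,1)$ and combine standard interior regularity for harmonic functions on $\{|x|<r_2\}$ with the smoothing estimate \eqref{harmonic.in}: since $w$ is harmonic on $\D$,
\begin{equation*}
\norm[L^2(|x|\leq r_1)]{\nabla w}
\lesssim_{r_1,r_2}\norm[L^2(|x|\leq r_2)]{w}
\lesssim_{m,r_2}\norm[H^{-m}(\bD)]{f}.
\end{equation*}
Alternatively, one can verify the harmonic-extension bound directly from the Fourier series $w(r,\theta)=\sum_n b_n r^{|n|}\varphi_n(\theta)$: differentiating termwise and integrating gives $\norm[L^2(|x|\leq r_1)]{\nabla w}^2\lesssim \sum_n|n|\,r_1^{2|n|}|b_n|^2$, and since $|n|r_1^{2|n|}\lesssim_{m,r_1}(1+|n|)^{-2m}$ for $r_1<1$, this is controlled by $\norm[H^{-m}(\bD)]{f}^2$. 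I do not expect a real obstacle here; the only thing to watch is that the implied constants blow up as $r_1\uparrow 1$ and $c\downarrow 0$, which is consistent with the dependencies claimed in the statement.
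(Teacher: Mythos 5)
Your proof is correct, and it follows the same basic strategy as the paper's --- subtract a harmonic competitor with the same boundary trace, use the coercive energy estimate for $\nabla\cdot(\sigma\nabla\,\cdot\,)$ with zero Dirichlet data, and reduce everything to an interior bound on the harmonic extension $w$, which \eqref{sigma.one} and the smoothing estimate \eqref{harmonic.in} make possible. The one structural difference is the choice of competitor: the paper compares $u$ with $h=\chi w$, where $\chi$ is a cutoff vanishing on $\{|x|\leq r_1\}$ and equal to $1$ near $\bD$, so that $u-h$ solves an equation with an explicit $L^2$ right-hand side $-(\Delta\chi)w-2\nabla\chi\cdot\nabla w$ supported in the annulus $\{r_1\leq|x|\leq r_2\}$, to which a stated a priori bound is applied; you compare $u$ with $w$ itself and localize instead through the support of $\sigma-1$ in the weak formulation. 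Your route is slightly cleaner: it dispenses with the cutoff, never requires the source term $\nabla\cdot((\sigma-1)\nabla w)$ to be a function (you only ever pair it against $\nabla\varphi$), and makes the dependence of the constant on $\essinf\sigma$ and $\esssup\sigma$ completely transparent. Both arguments ultimately need a gradient bound on $w$ in an interior disc rather than the literal statement of \eqref{harmonic.in}; your termwise Fourier computation (or the interior Caccioppoli estimate for harmonic functions) supplies this correctly, and the same small supplement is implicitly needed in the paper's version as well.
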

\begin{proof}
As before, let $w$ be the harmonic extension of $f$ into $\D$. 
Let $r_1$ be the radius defined in \eqref{sigma.one}, and let $0<r_1<r_2<1$. Choose $\chi \in C^\infty(\overline{\D})$ so that
\begin{equation}
\label{chi.def}
\chi(x) = \begin{cases}
0, & 0\leq |x| \leq r_1\\
1, & r_2\leq |x|\leq 1
\end{cases}
\end{equation}
Let $h(x) = \chi(x)w(x)$.  Note that $h$ has support where $\sigma(x) = 1$.  We compute
\begin{align*}
\nabla \cdot(\sigma \nabla (u-h)) &= \nabla \cdot(\sigma \nabla u) - \nabla\cdot(\sigma\nabla(h))\\
&= -(\Delta \chi)w - 2\nabla \chi \cdot \nabla w
\end{align*}
By construction, we know $\bdrytrace{(u-h)} = 0$.  

The unique solution $v \in H^1_0(\D)$ of
$$ \nabla \cdot \left( \sigma \nabla v \right) = g $$
obeys the bound
$$ \norm[L^2(\D)]{\nabla v} \lesssim \norm[L^2(\D)]{g} $$
where the implied constants depend only on $\essinf \sigma$ and $\esssup \sigma$. Hence
\begin{equation*}
\norm[L^2(|x|<r_1)]{\nabla u} = \norm[L^2(|x|<r_1)] {\nabla(u-w)}\lesssim \norm[L^2(\D)]{-(\Delta \chi)h - 2\nabla \chi \cdot \nabla h}
\end{equation*}
We obtain the desired estimate using \eqref{harmonic.in}.
\end{proof}

Next, we prove an operator bound on $(\Lam_\sig-\Lam_1)$ with a uniformity that will be useful later.

\begin{lemma}
\label{lemma:DN.map.smoothing}
 Let $\sigma \in L^\infty(\D)$ with $\sigma(x) \geq c >0$ a.e. for some constant $c$. Suppose, moreover, that $\sigma$ obeys \eqref{sigma.one}. Then for any $m>0$, the operator $(\Lam_\sigma - \Lam_1)$ is bounded from $H^{-m}(\bD)$ to $H^m(\bD)$ with constants depending only on $r_1$, $m$, $\essinf \sigma$, and $\esssup \sigma$. 
\end{lemma}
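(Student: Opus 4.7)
The plan is to use duality together with Lemma \ref{lemma:D.to.N.id} and Lemma \ref{elliptic.est}, so that the $H^m \to H^{-m}$ mapping property reduces to \emph{interior} $L^2$ bounds on $\nabla u$ and $\nabla v$, where $u$ solves \eqref{bvp} with boundary data $f$ and $v$ is the harmonic extension of $g$. The key structural observation is that $\sigma - 1$ vanishes outside $\{|x| \leq r_1\}$ by \eqref{sigma.one}, so the Alessandrini-type integrand in \eqref{D.to.N.id} is supported strictly inside $\D$, and only interior regularity of $u$ and $v$ is ever needed.

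Concretely, I would first fix $f \in H^{1/2}(\bD)$ and invoke duality to write
\[
\norm[H^m(\bD)]{(\Lam_\sig - \Lam_1) f} = \sup_{\substack{g \in H^{1/2}(\bD) \\ \norm[H^{-m}]{g} \leq 1}} \bigl|\langle g, (\Lam_\sig - \Lam_1) f\rangle\bigr|,
\]
using density of $H^{1/2}(\bD)$ in $H^{-m}(\bD)$. By Lemma \ref{lemma:D.to.N.id}, the pairing equals $\int_\D (\sigma-1)\nabla v \cdot \nabla u\, dx$, and since $\sigma - 1 \equiv 0$ for $|x| \geq r_1$, Cauchy--Schwarz yields
\[
\bigl|\langle g, (\Lam_\sig - \Lam_1) f\rangle\bigr| \leq \norm[\infty]{\sigma - 1}\, \norm[L^2(|x|<r_1)]{\nabla u}\, \norm[L^2(|x|<r_1)]{\nabla v}.
\]
The first gradient is controlled by $\norm[H^{-m}]{f}$ via Lemma \ref{elliptic.est}, with constants depending only on $m$, $r_1$, $\essinf \sigma$, $\esssup \sigma$. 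Once the analogous interior estimate holds for the harmonic extension, taking the supremum over $g$ gives the lemma, and the extension to $f \in H^{-m}$ follows from density.

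The remaining piece is the harmonic-extension gradient bound $\norm[L^2(|x|<r_1)]{\nabla v} \lesssim_{m, r_1} \norm[H^{-m}]{g}$. Expanding $g = \sum_n b_n \varphi_n$ and $v(r,\theta) = \sum_n r^{|n|} b_n \varphi_n(\theta)$, a direct computation in polar coordinates produces
\[
\norm[L^2(|x|<r_1)]{\nabla v}^2 \lesssim \sum_{n} |n|\, r_1^{2|n|}\, |b_n|^2.
\]
Since $r_1 < 1$, the factor $|n|\,r_1^{2|n|}(1+|n|)^{2m}$ is bounded uniformly in $n$ by a constant depending only on $m$ and $r_1$, so the right-hand side is controlled by $\norm[H^{-m}]{g}^2$. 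This is exactly the same exponential-versus-polynomial trick behind \eqref{harmonic.in}, just applied one derivative higher.

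I do not expect any serious obstacle: the only subtle step is making sure the support condition \eqref{sigma.one} is exploited \emph{before} applying Cauchy--Schwarz, so that the estimates never require boundary regularity of $u$ or $v$ (which would be false under the $L^\infty$ hypothesis on $\sigma$). The rest is bookkeeping about which constants the implied bounds depend on, and a density/duality argument to upgrade from $f \in H^{1/2}$ to $f \in H^{-m}$.
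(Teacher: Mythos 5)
Your proposal is correct and follows essentially the same route as the paper: reduce to the bilinear pairing via Lemma \ref{lemma:D.to.N.id}, exploit the support of $\sigma-1$ to apply Cauchy--Schwarz on $\{|x|<r_1\}$, control $\nabla u$ by Lemma \ref{elliptic.est} and $\nabla v$ by the interior estimate for harmonic extensions, then conclude by duality and density. The only difference is that you write out the Fourier-series gradient bound for the harmonic extension explicitly, whereas the paper simply invokes \eqref{harmonic.in}; your version is, if anything, slightly more complete on that point.
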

\begin{proof}
We will begin with $f, g\in H^{1/2}(\bD)$ and show that the pairing
\begin{equation*}
|\left\langle g, (\Lambda_\sigma - \Lambda_1) f \right\rangle|
\end{equation*}
can be bounded in terms of $\norm[H^{-m}]{f}$ and $\norm[H^{-m}]{g}$.  Then a density argument will establish the lemma.  

Let $v$ be a harmonic extension of $g$ into $\D$. Then by Lemma \ref{lemma:D.to.N.id} we obtain
\begin{align*}
\left| \left( g, (\Lambda_\sigma - \Lambda_1) f \right)\right| 
		&= \left|\int_\D (\sigma -1 ) \nabla v \cdot \nabla u \, dx \right|\\[5pt]
		&\lesssim_{\, \sigma}  \norm[L^2(|x|< r_1)]{\nabla u} \norm[L^2(|x| < r_1)]{\nabla v}\\[5pt]
		&\lesssim_{\, \sigma, r_1, m} \norm[H^{-m}]{f}\norm[H^{-m}]{g}
\end{align*}
where we used Lemma \ref{elliptic.est} to estimate $\norm[L^2(|x| < r_1)]{\nabla u}$ and we used \eqref{harmonic.in} again to estimate $\norm[L^2(|x|< r_1)]{\nabla v}$.  The implied constants depend only on $\essinf \sig$ and $\esssup \sig$.
\end{proof}

It now follows from Lemma \ref{lemma:DN.map.smoothing} and the compact embedding \eqref{Hs.compact} that $T_k$ is compact as an operator from $H^{1/2}(\bD)$ to $H^{-1/2}(\bD)$. Thus, to show that \eqref{N.bdryint.bis} is uniquely solvable, it suffices by Fredholm theory to show that the only vector $g \in H^{1/2}(\bD)$ with $g=-T_k g$ is the zero vector. We will show that any such $g$ generates a global solution to the problem 
\begin{equation}
\label{cgo.calderon}
\begin{aligned}
\nabla \cdot \left( \sigma \nabla u  \right) 	&=	0,	\\
\lim_{|x| \to \infty}  e^{-ikx}u(x,k) &= 0.
\end{aligned}
\end{equation}
We will then appeal to Theorem \ref{thm:cond.unique} to conclude that $g=0$.

\begin{proof}[Proof of Theorem \ref{thm:N.solve}]
We follow the proof of Theorem 5 in \cite[Section 7]{Nachman96}. Fix $k \in \C$, suppose that $g \in H^{1/2}(\bD)$ satisfies $T_k g = -g$, let $h = (\Lam_\sig - \Lam_1)g$ and let $v = S_k h$ on $\R^2 \setminus \bD$. The function $v$ is harmonic on $\R^2 \setminus \bD$ and continuous across $\bD$. Thus, if 
$v_+$ and $v_-$ are the respective boundary values of $v$ from $\R^2 \setminus \bD$ and from $\bD$, $v_+ = v_- = g$.   It follows from \eqref{Sk.out}--\eqref{Sk.in} and the fact that $g = -T_k g$ that 
\begin{equation}
\label{jump.v}	
\frac{\dee v_+}{\dee \nu} - \frac{\dee \nu_-}{\dee \nu} = h = \Lam_\sig g - \Lam_1 g.
\end{equation}
Since $\dee \nu_-/\dee \nu = \Lam_1 g$, we conclude that $\dee \nu_+ /\dee \nu = \Lam_\sig g$. 
Now define
$$ u(x) = 
\begin{cases}
v(x), 	&	x \in \R^2 \setminus \Omega\\
u_i(x),	&	x \in \Omega
\end{cases}
$$
where $u_i$ is the unique solution to the problem 
$$ 
\nabla \cdot \left( \nabla u_i \right) = 0, \quad \left. u \right|_{\bD} = g.
$$
In this case $u_+ = u_-$ and $\dee u_+ /\dee \nu = \dee u_-/\dee \nu$, so $u$ extends to a solution of \eqref{cgo.calderon} as claimed. It now follows from Theorem \ref{thm:cond.unique}
that $u = 0$. Since $g$ is the boundary trace of $u$, we conclude that $g=0$. 
\end{proof}

\section{Convergence of Scattering Transforms}
\label{sec:scattering}

In this section we prove Theorem \ref{thm:N.approx} in two steps. First, we show that the Dirichlet-to-Neumann operators $\Lam_{\sig_n}$ associated to the sequence $\{ \sig_n \}$ converge in norm to $\Lam_\sig$. We then use this fact to conclude that the corresponding scattering transforms converge. The second step uses Astala-P\"{a}iv\"{a}rinta's scattering transform to identify the limit.

We begin with a simple result on weak convergence that exploits Alessandrini's identity and convergence of positive quadratic forms.

\begin{lemma}
\label{lemma:D-to-N.weak}
Suppose that $\{ \sigma_n \}$ is a sequence of positive $L^\infty(\D)$ obeying conditions (i)--(iii) of Theorem \ref{thm:N.approx}. 
Then $\Lambda_{\sigma_n} \to \Lambda_\sigma$ in the weak operator topology on $\calL(H^{1/2}(\bD), H^{-1/2}(\bD))$.
\end{lemma}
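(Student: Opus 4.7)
The plan is to prove weak operator convergence by fixing $f,g \in H^{1/2}(\bD)$ and verifying $\langle g, \Lam_{\sig_n} f\rangle \to \langle g, \Lam_\sig f\rangle$. The key idea is to use Alessandrini's identity \eqref{Alessandrini} to convert the boundary pairing into a bulk integral over $\D$ against a fixed test function, then pass to the limit using dominated convergence for $\sig_n$ and weak $H^1$ compactness for the corresponding solutions of \eqref{bvp}.

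Let $V$ be the harmonic extension of $g$ to $\D$ (independent of $n$), and let $u_n, u \in H^1(\D)$ be the unique solutions of the conductivity equation with boundary trace $f$ for $\sig_n$ and $\sig$ respectively. Alessandrini's identity gives
\begin{equation*}
\langle g, \Lam_{\sig_n} f\rangle - \langle g, \Lam_\sig f\rangle
= \int_\D (\sig_n - \sig) \nabla V \cdot \nabla u_n \, dx
+ \int_\D \sig \nabla V \cdot \nabla(u_n - u) \, dx.
\end{equation*}
The first term vanishes in the limit because $\|(\sig_n - \sig)\nabla V\|_{L^2} \to 0$ by dominated convergence (using $|\sig_n - \sig| \leq \esssup \sig$ and $\sig_n \to \sig$ a.e.) while $\|\nabla u_n\|_{L^2}$ is uniformly bounded. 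The second term vanishes provided $u_n \rightharpoonup u$ weakly in $H^1(\D)$, since $\sig \nabla V \in L^2(\D)$.

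The main step is therefore to establish this weak convergence $u_n \rightharpoonup u$. The uniform lower bound $\sig_n \geq c$ combined with the energy inequality $\int \sig_n |\nabla u_n|^2 \leq \int \sig_n |\nabla V_f|^2 \leq (\esssup \sig)\|\nabla V_f\|_{L^2}^2$ (where $V_f$ is the harmonic extension of $f$) gives a uniform bound on $\|\nabla u_n\|_{L^2}$, which together with the fixed trace $u_n|_{\bD} = f$ yields a uniform $H^1$ bound. Any weak subsequential limit $u^*$ has trace $f$. Passing to the limit in the weak formulation $\int \sig_n \nabla u_n \cdot \nabla \varphi \, dx = 0$ for $\varphi \in H^1_0(\D)$, using strong $L^2$ convergence $\sig_n \nabla \varphi \to \sig \nabla \varphi$ paired against weakly convergent $\nabla u_n$, shows that $u^*$ weakly solves $\nabla \cdot (\sig \nabla u^*) = 0$. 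Uniqueness of the Dirichlet problem for $\sig$ then forces $u^* = u$, so the entire sequence converges.

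No single step here is particularly subtle — the argument amounts to bounded coefficients, Lax–Milgram uniqueness, and dominated convergence. The real analytic content needed for Theorem \ref{thm:N.approx} lies in the sequel, namely upgrading this weak operator convergence to \emph{norm} convergence of $\Lam_{\sig_n} - \Lam_1$ using the uniform smoothing provided by Lemma \ref{lemma:DN.map.smoothing}; that uniform compactness is what will be the real engine of the approximation theorem, not this preliminary lemma.
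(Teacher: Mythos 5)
Your proof is correct, but it takes a genuinely different route from the paper's. The paper only verifies convergence on the diagonal, $\langle f,(\Lam_\sig-\Lam_{\sig_n})f\rangle\to 0$, and recovers the off-diagonal pairings by polarization; this is legitimate precisely because monotonicity makes each $\Lam_\sig-\Lam_{\sig_n}$ a \emph{nonnegative} quadratic form, and the diagonal term is handled by the monotone convergence theorem together with an energy identity (equation \eqref{ipid}) that yields the stronger conclusion $\nabla u_n\to\nabla u$ strongly in $L^2(\D)$. You instead attack the general pairing $\langle g,\Lam_{\sig_n}f\rangle$ head-on via Alessandrini's identity with the fixed harmonic extension of $g$ as test function, and you only need $u_n\rightharpoonup u$ weakly in $H^1(\D)$, which you get from the uniform energy bound plus identification of the weak limit through the weak formulation and Lax--Milgram uniqueness. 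The trade-off: your argument uses the monotonicity hypothesis only to supply the uniform upper bound $\sig_n\le\sig\le\esssup\sig$ (dominated convergence does the rest), so it generalizes to non-monotone a.e.\ convergent sequences with two-sided uniform bounds; the paper's argument is shorter once one accepts the form-monotonicity of Dirichlet-to-Neumann maps and delivers strong $L^2$ convergence of the gradients as a by-product. Two small points worth making explicit if you write this up: the weak closedness of $H^1_0(\D)$ is what guarantees the subsequential limit $u^*$ has trace $f$, and the passage from subsequential to full-sequence convergence uses uniqueness of the limit. Your closing remark is also accurate: the substantive content of Theorem \ref{thm:N.approx} is the upgrade to norm convergence via the uniform smoothing of Lemma \ref{lemma:DN.map.smoothing}.
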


\begin{proof}
For any $\sigma$, it follows from \eqref{Alessandrini} that $\Lam_\sig$ defines a positive 
quadratic form
$$ \left\langle f , \Lam_\sig f \right\rangle  =\int_{\D} \sig \left| \nabla u \right|^2 \, dx $$
on $H^{1/2}(\bD)$. Moreover, by monotone convergence, the quadratic forms $\Lam_\sig - \Lam_{\sig_n}$ are nonnegative for all $n$. If we can show that 
\begin{equation}
\label{D-to-N.weak.diag}
\lim_{n \to \infty} \left\langle f, \left( \Lam_\sig - \Lam_{\sig_n} \right) f  \right\rangle = 0
\end{equation}
it will then follow by polarization that $\Lam_{\sig_n} \to \Lam_\sig$ in the weak operator topology. But 
\begin{equation}
\label{DN.weak.diag}
\left\langle f, \left( \Lam_\sig - \Lam_{\sig_n} \right) f \right\rangle
	= \int_\D (\sig-\sig_n) \left| \nabla u \right|^2 \, dx +
		\int_\D \sig_n \left( \left| \nabla u \right|^2 - \left| \nabla u_n \right|^2\right)  \ dx.
\end{equation}
The first right-hand term in \eqref{DN.weak.diag} goes to zero by monotone convergence. Since the $\{ \sig_n \}$ are uniformly bounded, it suffices to show that $\nabla u_n  \to \nabla u$ in $L^2$. A straightforward computation shows that
$$ 0 = \nabla \cdot \left( \sig_n \nabla (u_n - u )\right) + \nabla \cdot \left( (\sig_n -\sig) \nabla u \right). $$
Multiplying through by $v_n =u_n - u$ and integrating over $\D$, we obtain 
\begin{equation}
\label{ipid}
\int_\D \sig_n \left| \nabla v_n \right|^2 \ dx = - \int_\D  (\sig_n - \sig) \nabla v_n \cdot \nabla u \, dx. 
\end{equation}
Since $\sig_n$ is bounded below by a fixed positive constant $c$ independent of $n$,
we can use the Cauchy-Schwarz inequality to conclude that
$$
\frac{c}{2} \int_\D \left| \nabla v_n \right|^2 \, dx 
		\leq \frac{1}{2c} \int_D (\sig-\sig_n) \left| \nabla u \right|^2 \ dx
$$
and conclude that $\nabla u_n \to \nabla u$ in $L^2$ by monotone convergence.
\end{proof}

From Lemma \ref{lemma:DN.map.smoothing} we obtain the following uniform approximation property for the operators 
\begin{equation}
\label{An}
A_n \coloneqq \Lam_{\sig_n} - \Lam_1.
\end{equation}

\begin{lemma}
Suppose that $\{ \sig_n \}$ is a sequence of conductivities obeying hypotheses (i)--(iii) of
Theorem \ref{thm:N.approx}, and let $A_n$ be defined as in \eqref{An}.
Given any $\eps>0$ there is a $k \in \N$ independent of $n$ so that
$$
\norm[H^{1/2} \to H^{-1/2}]{(I-P_k) A_n} < \eps, \quad \
\norm[H^{1/2} \to H^{-1/2}]{A_n(I - P_k)} < \eps, 
$$
\end{lemma}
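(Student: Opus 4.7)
The plan is to reduce both inequalities to the uniform smoothing bound from Lemma \ref{lemma:DN.map.smoothing}, combined with the elementary observation that the tail projection $I - P_k$ has small operator norm when it maps from a sufficiently smooth Sobolev space into $H^{-1/2}(\bD)$.

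First, I would apply Lemma \ref{lemma:DN.map.smoothing} at some fixed $m > 0$ (for concreteness, $m = 1$) to each $\sigma_n$. Hypotheses (i)--(iii) make the constants appearing in that lemma uniform in $n$: the radius $r_1$ is common to all $\sigma_n$, the lower bound $\essinf \sigma_n \geq c$ is given, and the monotone increase to $\sigma \in L^\infty(\D)$ forces $\esssup \sigma_n \leq \esssup \sigma < \infty$. This furnishes a single constant $C$ with $\norm[H^{-m}(\bD) \to H^m(\bD)]{A_n} \leq C$ for every $n$. Since $H^{1/2}(\bD)$ embeds contractively into $H^{-m}(\bD)$, the operators $A_n : H^{1/2}(\bD) \to H^m(\bD)$ are then uniformly bounded as well.

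Next, a short Fourier computation from the definition of $H^s(\bD)$ yields $\norm[H^m \to H^{-1/2}]{I - P_k} \leq (1+k)^{-(1+2m)/2}$, because the modes surviving $I - P_k$ all satisfy $|n| > k$ and hence $(1+|n|)^{-1} \leq (1+k)^{-(1+2m)} (1+|n|)^{2m}$. Composing the two estimates gives $\norm[H^{1/2} \to H^{-1/2}]{(I - P_k) A_n} \leq C(1+k)^{-(1+2m)/2}$, a bound independent of $n$ that tends to $0$ as $k \to \infty$; choosing $k$ large enough beats $\eps$ uniformly in $n$.

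For the second inequality I would appeal to duality. The Dirichlet-to-Neumann operator is symmetric under the $L^2(\bD)$ pairing, as is immediate from Alessandrini's identity \eqref{Alessandrini}, so $A_n$ coincides with its own adjoint as a bounded operator $H^{1/2}(\bD) \to H^{-1/2}(\bD)$. The projection $P_k$ is $L^2$-orthogonal and therefore also self-adjoint, so the Banach adjoint of $A_n(I - P_k)$ is $(I - P_k) A_n$ and the two share the same operator norm; the first estimate thus transfers verbatim. The only real obstacle is the bookkeeping of uniformity: one must check that every constant produced on the way out of Lemma \ref{lemma:DN.map.smoothing} depends only on quantities controlled by (i)--(iii), which is precisely the form in which that lemma was stated, so the proof is essentially mechanical once the right smoothing exponent is chosen.
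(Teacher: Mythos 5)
Your argument is correct and follows essentially the same route as the paper: the uniform $H^{-m}\to H^{m}$ bound on $A_n$ from Lemma \ref{lemma:DN.map.smoothing} (with constants controlled by (i)--(iii)), composed with the quantitative decay of $I-P_k$ between Sobolev scales, and duality for the second estimate. Your version is slightly more explicit than the paper's in verifying the uniformity of the constants and in identifying the Banach adjoint of $A_n$ with $A_n$ itself via the symmetry of the Dirichlet-to-Neumann map, but these are refinements of the same proof, not a different one.
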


\begin{proof}
From Lemma \ref{lemma:DN.map.smoothing} we have the uniform operator bound
$ \norm[H^m \to H^{-m}]{A_n} \lesssim_{\, m} 1$ since the $\sig_n$ have uniformly bounded
essential infima and suprema and all obey \eqref{sigma.one}. If $A_n'$ denotes the Banach space adjoint of $A_n$, we have the same bound on $A_n'$ by duality. The second bound is equivalent to the bound 
$$\norm[H^{1/2} \to H^{-1/2}]{(I-P_k) A_n'} < \eps$$ 
by duality, so we'll only prove the first bound. We write
\begin{align*}
\norm[H^{1/2} \to H^{-1/2}]{(I-P_k) A_n}
	&\leq	\norm[H^m \to H^{1/2}]{(I-P_k)} \norm[H^{-m} \to H^m]{A_n}	\\
	&\lesssim_{\, m} k^{1/2-m}
\end{align*}
with constants uniform in $n$. 
\end{proof}

Now let $A = \Lam_\sig - \Lam_1$ where $\sig_n \to \sig$. 

\begin{proposition}
\label{prop:D-to-N-strong}
Suppose that $\{ \sig_n \}$satisfies hypotheses (i)--(iii) of Theorem \ref{thm:N.approx}. 
Then $A_n  \to A$ in the norm topology on the bounded operators from $H^{1/2}$ to $H^{-1/2}$.
\end{proposition}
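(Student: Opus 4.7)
My plan is to combine the uniform smoothing estimate of the preceding lemma with the weak operator convergence of Lemma \ref{lemma:D-to-N.weak} via a standard finite-rank cutoff argument. First I would observe that the hypotheses of Lemma \ref{lemma:DN.map.smoothing} are met by the limit $\sigma$ itself, with constants depending only on $r_1$, $\essinf \sigma$, and $\esssup \sigma$; consequently the uniform cutoff estimates of the preceding lemma hold also for $A$. So, given $\eps > 0$, I can choose a single $k \in \N$, independent of $n$, such that
\begin{equation*}
\norm[H^{1/2} \to H^{-1/2}]{(I - P_k) B} < \eps \quad \text{and} \quad \norm[H^{1/2} \to H^{-1/2}]{B (I - P_k)} < \eps
\end{equation*}
for every $B$ in the collection $\{A_n\}_{n \in \N} \cup \{A\}$.

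With such a $k$ fixed, I would decompose
\begin{equation*}
A_n - A = P_k(A_n - A)P_k + (I - P_k)(A_n - A) + P_k(A_n - A)(I - P_k)
\end{equation*}
and note, by the triangle inequality and the uniform bounds above, that the last two terms together contribute at most $4\eps$ to $\norm[H^{1/2} \to H^{-1/2}]{A_n - A}$ for every $n$. This reduces the proposition to showing that for the fixed $k$, the finite-rank operator $P_k(A_n - A) P_k$ tends to $0$ in the $H^{1/2} \to H^{-1/2}$ operator norm as $n \to \infty$.

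For this last step I would argue that $P_k(A_n - A)P_k$ factors through the $(2k+1)$-dimensional span of $\{\varphi_j : |j| \leq k\}$, on which all norms are equivalent, so its $H^{1/2} \to H^{-1/2}$ norm is controlled by the finitely many matrix entries $\langle \varphi_j, (A_n - A)\varphi_l \rangle$ with $|j|, |l| \leq k$. Each such entry tends to $0$ because $A_n - A = \Lam_{\sig_n} - \Lam_\sig$ and $\Lam_{\sig_n} \to \Lam_\sig$ in the weak operator topology by Lemma \ref{lemma:D-to-N.weak}. Sending $n \to \infty$ first and then $\eps \to 0$ would conclude the proof.

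The main obstacle is essentially already handled by the preceding lemma: it is the identification of a uniformly smoothing family $\{A_n\}$, which is what allows weak operator convergence to be upgraded to norm convergence. The only remaining care is to ensure that the smoothing estimate transfers to the limit operator $A$; this is automatic because Lemma \ref{lemma:DN.map.smoothing} depends only on quantities that are uniformly controlled under hypotheses (i)--(iii), and the monotone limit $\sig$ inherits them.
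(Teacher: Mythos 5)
Your proof is correct and follows essentially the same route as the paper: a finite-rank cutoff $P_k$ chosen uniformly in $n$ via the uniform smoothing estimate, with the compressed piece $P_k(A_n-A)P_k$ handled by weak operator convergence and finite-dimensionality. In fact your decomposition $A_n-A=P_k(A_n-A)P_k+(I-P_k)(A_n-A)+P_k(A_n-A)(I-P_k)$ is the algebraically exact identity (the paper's displayed version omits a $P_k$ in the third term), and your observation that the cutoff bound applies to $A$ itself is a slightly cleaner substitute for the paper's appeal to compactness of $A$.
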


\begin{proof}
Write
\begin{equation}
\label{trichotomy}
A_n - A = P_k (A_n - A) P_k + (I-P_k) (A_n  - A) + (A_n - A)(I-P_k). 
\end{equation}
Since $A$ is a fixed compact operator, we can choose $N \in \N$ so  $\norm[H^{1/2} \to H^{-1/2}]{(I-P_k) A}$ and $\norm[H^{1/2} \to H^{-1/2}]{A(I-P_k)}$ are small for any $k \geq N$. Combining this observation with Proposition \ref{prop:D-to-N-strong}, we can choose $k \in \N$, uniformly in $n$, so that the first and and third right-hand terms of \eqref{trichotomy} are small uniformly in $n$. The middle term vanishes for any fixed $k$ and $n \to \infty$ by Lemma \ref{lemma:D-to-N.weak}.
\end{proof}

As an easy consequence:
\begin{proposition}
Fix $k \in \C$. 
Suppose that $\{ \sig_n \}$ is a sequence obeying hypotheses (i)--(iii) of Theorem \ref{thm:N.approx}, and denote by $g_n(\dotarg,k)$ and $g(\dotarg,k)$ the respective  solutions of \eqref{N.bdryint.bis} corresponding to $\sig_n$ and $\sig$. Then, for each fixed $k$,  $g_n \to g$ in $H^{1/2}(\bD)$. Moreover, the scattering transforms $\bft_n$ of $\sig_n$ converge pointwise to $\bf$ given by \eqref{N.bdry.t.bis}.
\end{proposition}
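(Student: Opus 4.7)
The plan is to reduce everything to the norm convergence $A_n \to A$ established in Proposition \ref{prop:D-to-N-strong}. Set $T_k = S_k A$ and $T_k^{(n)} = S_k A_n$, viewed as operators on $H^{1/2}(\bD)$. By the mapping property \eqref{Sk.map} applied with $s=-1/2$, $S_k$ sends $H^{-1/2}(\bD)$ continuously to $H^{1/2}(\bD)$, so Proposition \ref{prop:D-to-N-strong} yields $T_k^{(n)} \to T_k$ in the operator norm on $\calL(H^{1/2}(\bD))$.

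Each $\sig_n$ satisfies the hypotheses (i)--(ii) of Theorem \ref{thm:N.solve}, so the equation $(I + T_k^{(n)}) g_n = \left.e^{ikx}\right|_{\bD}$ has a unique solution $g_n \in H^{1/2}(\bD)$, and similarly $g$ exists and is unique for $\sig$. Since $T_k$ is compact and $I + T_k$ is injective by Theorem \ref{thm:N.solve}, $I + T_k$ is boundedly invertible on $H^{1/2}(\bD)$. The set of invertible operators on a Banach space is open and inversion is continuous there, so norm convergence $T_k^{(n)} \to T_k$ implies that, for $n$ sufficiently large, $I + T_k^{(n)}$ is invertible with
$$
(I+T_k^{(n)})^{-1} \longrightarrow (I+T_k)^{-1}
$$
in operator norm. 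Applying both sides to the fixed datum $\left.e^{ikx}\right|_{\bD}$ yields $g_n \to g$ in $H^{1/2}(\bD)$.

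For the pointwise convergence of the scattering transforms, rewrite \eqref{N.bdry.t.bis} as
$$
\bft_n(k) - \bft(k) = \bigl\langle \left.e^{i\kbar \xbar}\right|_{\bD},\, A_n g_n - A g \bigr\rangle,
$$
where $\langle \cdot,\cdot\rangle$ denotes the $H^{1/2}(\bD)$--$H^{-1/2}(\bD)$ duality. A triangle inequality gives
$$
\norm[H^{-1/2}(\bD)]{A_n g_n - A g} \leq \norm[H^{1/2}\to H^{-1/2}]{A_n}\, \norm[H^{1/2}(\bD)]{g_n - g} + \norm[H^{1/2}\to H^{-1/2}]{A_n - A}\, \norm[H^{1/2}(\bD)]{g}.
$$
The first right-hand term vanishes since $\norm[H^{1/2}\to H^{-1/2}]{A_n}$ is uniformly bounded by Lemma \ref{lemma:DN.map.smoothing} and $g_n \to g$, and the second vanishes by Proposition \ref{prop:D-to-N-strong}; so $\bft_n(k) \to \bft(k)$.

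The only real subtlety is that the uniform bound on $\norm[H^{1/2}\to H^{-1/2}]{A_n}$, needed to control the cross term $A_n(g_n-g)$, must come with constants independent of $n$; this is precisely what Lemma \ref{lemma:DN.map.smoothing} provides, since the constants there depend only on $r_1$ and the uniform essential bounds on $\sig_n$ guaranteed by assumptions (i)--(ii). Everything else is standard Fredholm perturbation theory, so I do not anticipate serious obstacles.
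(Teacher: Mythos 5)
Your proposal is correct and follows essentially the same route as the paper: norm convergence $T_k^{(n)} \to T_k$ via \eqref{Sk.map} and Proposition \ref{prop:D-to-N-strong}, continuity of inversion (the paper phrases this as the second resolvent identity) to get $g_n \to g$, and then the duality pairing with the uniform bound on $A_n$ to pass to the scattering transforms. Your write-up is somewhat more explicit than the paper's, particularly in the triangle-inequality step for $\bft_n(k)-\bft(k)$, but there is no substantive difference in approach.
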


\begin{proof}
By a slight abuse of notation, denote by $T_n$ the operator $S_k \left(\Lam_{\sig_n} - \Lam_1\right)$ and by $T$ the operator $S_k \left(\Lam_{\sig} - \Lam_1\right)$. It follows from \eqref{Sk.map} that $T_n \to T$ in $\calL(H^{1/2},H^{1/2})$. Since
$$ g_n = (I-T_n)^{-1} \left(\left.e^{ikx}\right|_{\bD}\right), \quad g = (I-T)^{-1} \left(\left.e^{ikx}\right|_{\bD} \right), $$
it follows from the second resolvent identity that $g_n \to g$ in $H^{1/2}(\bD)$. Convergence of $\bft_n$ to $\bft$ follows from the norm convergence of $g_n$ to $g$ and of $\Lam_{\sig_n} - \Lam_1$ to $\Lam_\sig -\Lam_1$. 
\end{proof}

In the remainder of this section, we will identify what $\bft$ actually is. In order to do so we need to prove a convergence theorem for the Astala-P\"{a}iv\"{a}rinta scattering transforms $\tau_n$ of the Beltrami coefficients $\mu_n = (1-\sig_n)/(1+\sig_n)$ to the transform $\tau$ of $\sig$ that is of some interest in itself.

\begin{proposition}
\label{prop:Beltrami}
Suppose that $\{\mu_n\}$ is a sequence of Beltrami coefficients with $0 \leq \mu_n(x) \leq \kappa$ for a.e.\ $x$ and $0 \leq \kappa < 1$. Suppose further that  $\mu_n(x) \to \mu(x)$ where $\mu \in L^\infty(\D)$ has the same properties. Finally, fix $k \in \C$ and let 
$M_{\pm \mu_n}(x,k)$ be the normalized CGO solution for the Beltrami equation \eqref{Beltrami} with Beltrami coefficients $\pm \mu_n$, and let $M_{\pm u}$ be the normalized CGO solution for $\pm \mu$.  Then, for a single choice of sign,
$M_{\pm \mu_n} - 1 \to M_{\pm \mu} - 1$ weakly in $W^{1,p}(\R^2)$ for any $p \in (2,1+\kappa^{-1})$. 
\end{proposition}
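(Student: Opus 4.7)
The plan is to extract a weakly convergent subsequence using uniform a priori bounds, pass to the limit in the Beltrami equation, and then identify the limit via the uniqueness clause of the Astala--P\"aiv\"arinta theorem; since every subsequence then yields the same limit, the full sequence converges.

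First I would establish that $\{M_{\pm \mu_n} - 1\}$ is uniformly bounded in $W^{1,p}(\R^2)$ for each $p \in (2, 1+\kappa^{-1})$. The Astala--P\"aiv\"arinta construction represents $M_\mu - 1$ as the solution of a Beurling-transform integral equation whose invertibility and operator-norm bound depend only on $\kappa$ and $p$. Because $\|\mu_n\|_\infty \le \kappa$ uniformly, this produces a uniform $W^{1,p}(\R^2)$ bound. As a complementary tool, Theorem~\ref{thm.aim.caccio} supplies a Caccioppoli estimate whose constants depend only on $\kappa$ and $p$, giving uniform $W^{1,p}_{\loc}$ control that pairs with the integral-equation decay at infinity.

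By reflexivity of $W^{1,p}(\R^2)$, pass to a subsequence (not relabeled) along which $M_{\pm \mu_n} - 1 \rightharpoonup N_\pm - 1$ weakly in $W^{1,p}(\R^2)$; Rellich--Kondrachov then upgrades this to strong $L^r_{\loc}$ convergence for $r$ below the Sobolev conjugate, and by a further diagonal extraction we may assume pointwise a.e.\ convergence. To pass to the limit in the PDE, note that $f_n := e^{ikx} M_{\pm \mu_n}$ satisfies $\dbar f_n = (\pm \mu_n)\,\dee f_n$ distributionally, so $\dee f_n \rightharpoonup \dee f$ and $\dbar f_n \rightharpoonup \dbar f$ weakly in $L^p_{\loc}$, where $f = e^{ikx} N_\pm$. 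Since $\mu_n \to \mu$ pointwise a.e.\ with $|\mu_n| \le \kappa$, dominated convergence gives $\mu_n \to \mu$ strongly in $L^q_{\loc}$ for every $q<\infty$; for any test function $\varphi \in C^\infty_c(\R^2)$ the product $\mu_n \varphi$ converges strongly in $L^{p'}$, so the strong--weak pairing yields $\int \mu_n (\dee f_n)\varphi\,dx \to \int \mu (\dee f)\varphi\,dx$. Hence $N_\pm$ is a normalized CGO solution of the Beltrami equation for $\pm\mu$.

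By the uniqueness clause of the cited Astala--P\"aiv\"arinta theorem, $N_\pm = M_{\pm \mu}$, and the usual subsequence argument promotes this to weak convergence of the entire sequence. The main obstacle is genuinely step one: obtaining uniform $W^{1,p}(\R^2)$ bounds (as opposed to merely uniform $W^{1,p}_{\loc}$ bounds) requires either tracking the $\mu$-dependence of the inverse in the Beurling integral equation underlying the Astala--P\"aiv\"arinta construction or producing a separate uniform decay estimate on $M_{\pm\mu_n} - 1$ at infinity; once those uniform bounds are in hand, the nonlinear passage to the limit in $\mu_n \dee f_n$ is handled cleanly by the strong--weak pairing above.
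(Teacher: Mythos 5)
Your overall skeleton --- uniform $W^{1,p}(\R^2)$ bound, weakly convergent subsequence, passage to the limit in the Beltrami equation via a strong--weak pairing, identification of the limit through the uniqueness clause of \cite[Theorem 4.2]{AP06a}, and promotion to the full sequence --- is exactly the architecture of the paper's proof, and your limit-passage and identification steps are sound (modulo the fact that the relevant equation is the $\R$-linear one $\dbar f = \mu\,\overline{\dee f}$, which changes nothing in the pairing argument since conjugation preserves weak convergence). The genuine gap is precisely where you suspect it is, and it is not minor: the uniform bound $\sup_n \norm[W^{1,p}(\R^2)]{M_{\pm\mu_n}-1}<\infty$ does not follow from ``a Beurling-transform integral equation whose invertibility and operator-norm bound depend only on $\kappa$ and $p$.'' Writing $M=1+\omega$ and $g=\dbar\omega$, the CGO equation becomes
\[
g \;=\; \mu e_{-k}\,\overline{Sg} \;-\; i\kbar\,\mu e_{-k}\,\overline{Pg} \;-\; i\kbar\,\mu e_{-k},
\]
where $S$ is the Beurling transform and $P$ the Cauchy transform. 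Only the first operator is a contraction, with norm at most $\kappa\norm[L^p\to L^p]{S}<1$ in the critical range; the second is a compact, $k$-dependent zeroth-order perturbation. The equation is therefore merely Fredholm of index zero, its inverse exists by injectivity, and the norm of that inverse a priori depends on the individual coefficient $\mu_n$, not just on $\kappa$ and $p$. This is the same situation as for Nachman's boundary integral equation: solvability comes from the Fredholm alternative, with no free uniformity in the data.

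The paper closes exactly this gap with a normalization/contradiction argument (Lemma \ref{lemma:Mn-bounded}, supported by the Caccioppoli-type estimate of Lemma \ref{lemma:v_n.est}): assuming $c_n=\norm[W^{1,p}(\R^2)]{M_n-1}\to\infty$, one sets $v_n=c_n^{-1}(M_n-1)$, uses Cauchy-transform estimates together with the Caccioppoli inequality to show that a weak limit $v$ exists, is nonzero, and solves the homogeneous equation $\dbar v=\mu\,\overline{\dee(e_k v)}$ with $v\in W^{1,p}(\R^2)$, contradicting the uniqueness of the normalized CGO solution. Note that this argument exploits the pointwise convergence $\mu_n\to\mu$ to identify the limiting coefficient, so it delivers uniformity along the given sequence rather than over all admissible $\mu$ --- a weaker but sufficient statement. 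To complete your proof you would need either to carry out such a compactness argument or to actually establish the uniform injectivity/decay estimate you allude to; as written, the ``main obstacle'' you name is the real content of the proposition and is left unresolved.
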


We will prove Proposition \ref{prop:Beltrami} in several steps. First we show how to conclude the proof of Theorem \ref{thm:N.approx} given its result.

\begin{proof}[Proof of Theorem \ref{thm:N.approx}, given Proposition	\ref{prop:Beltrami}]
Proposition \ref{prop:Beltrami} and \eqref{tau} show that $\tau_{\mu_n} \to \tau$ pointwise as $n \to \infty$ since the integral in \eqref{tau} may be regarded as integrating the derivatives of $M_{\pm \mu_n}$ (which, by \eqref{Beltrami}, are supported in the unit disc) against a smooth, compactly supported function which is identically $1$ in a neighborhood of $\D$. Since $\tau_n$ converges pointwise to $\tau$ and $\bft_n(k) = -4\pi i \overline{k}\tau_{\mu_n}(k)$, we conclude that $\bft(k) = -4\pi i \overline{k}\tau(k)$.
\end{proof}

To establish the weak convergence, we first need a uniform bound on $M_{\pm \mu_n} -1$ in 
$W^{1,p}(\R^2)$.

\begin{lemma}
\label{lemma:Mn-bounded}
Suppose that $\{ \mu_n \}$ is a sequence of Beltrami coefficients obeying the hypothesis of Proposition \ref{prop:Beltrami}, and let $M_n = M_{\mu_n}$.  Then there exists a constant $C$ such that
\begin{equation}
\sup_{n}\norm[W^{1,p}(\R^2)]{M_n-1} < C.
\end{equation}
\end{lemma}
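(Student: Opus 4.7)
The plan is to derive a Fredholm-type integral equation for $\omega_n := \dbar M_n$ in $L^p(\R^2)$ and to invert it with bounds uniform in $n$. Writing $f_n = e^{ikx}M_n$ and substituting into the Beltrami equation, then using the Beurling identity $\dee(M_n - 1) = B \omega_n$ (with $B$ the Beurling transform) and the Cauchy representation $M_n - 1 = \mathcal{C}\omega_n$ (with $\mathcal{C}$ the solid Cauchy transform, valid because $\omega_n = \mu_n(ikM_n + \dee M_n)$ is supported in $\overline{\D}$ and $M_n - 1$ decays at infinity), one obtains the equation
$$(I - \mu_n B)\omega_n \;=\; ik\mu_n\bigl(1 + \mathcal{C}\omega_n\bigr).$$

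The Astala-Iwaniec $L^p$-theory developed in \cite{AIM09} guarantees that $I - \mu_n B$ is invertible on $L^p(\R^2)$ for $p \in (1+\kappa, 1+\kappa^{-1})$ with operator-norm bound depending only on $\kappa$ and $p$. Applying this inverse recasts the equation as $(I - K_n)\omega_n = \xi_n$, where $K_n := ik(I - \mu_n B)^{-1}\mu_n\,\mathcal{C}$ and $\sup_n \norm[L^p]{\xi_n} < \infty$. The uniform compact support of $\mu_n$ in $\overline{\D}$ together with the smoothing of $\mathcal{C}$ on compactly supported densities makes each $K_n$ a compact operator on $L^p(\R^2)$. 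Using the hypothesis $\mu_n \to \mu$ pointwise with $|\mu_n| \leq \kappa$, dominated convergence upgrades this to operator-norm convergence $K_n \to K := ik(I - \mu B)^{-1}\mu\,\mathcal{C}$. Since the cited \AP theorem gives existence and uniqueness of $M_\mu$, and this translates into invertibility of $I - K$, a standard compact-perturbation argument yields a uniform bound on $\norm[L^p \to L^p]{(I - K_n)^{-1}}$ for all sufficiently large $n$, with the remaining finitely many indices posing no difficulty.

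From the resulting uniform $L^p$-bound on $\omega_n = \dbar M_n$, Calder\'{o}n-Zygmund boundedness of $B$ controls $\dee M_n = B\omega_n$, while standard $L^p$-mapping properties of $\mathcal{C}$ applied to densities supported in $\overline{\D}$ control $M_n - 1 = \mathcal{C}\omega_n$. Combining these estimates yields the claimed uniform $W^{1,p}(\R^2)$ bound.

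The principal obstacle is the operator-norm convergence $K_n \to K$, since the hypothesis only provides $\mu_n \to \mu$ pointwise a.e., not in $L^\infty$. The uniform compact support of $\mu_n$ together with the compactness of $\mathcal{C}$ on $L^p(\overline{\D})$ is precisely what allows dominated convergence to trade pointwise convergence of the multipliers for norm convergence of the composed operators; without the uniform support in (i), this step would fail.
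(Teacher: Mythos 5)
Your argument is correct in outline but takes a genuinely different route from the paper. The paper argues by contradiction: setting $c_n = \norm[W^{1,p}(\R^2)]{M_n-1}$ and assuming $c_n\to\infty$, it normalizes $v_n = c_n^{-1}(M_n-1)$, extracts a weak limit via Rellich--Kondrachov, uses a Caccioppoli-type estimate (Theorem \ref{thm.aim.caccio}, via Lemma \ref{lemma:v_n.est}) to show the limit is a \emph{nonzero} $W^{1,p}(\R^2)$ solution of the homogeneous equation $\dbar v = \mu\,\overline{\dee(e_k v)}$, and then contradicts the uniqueness part of \cite[Theorem 4.2]{AP06a}. You instead invert the associated singular integral equation directly, with bounds uniform in $n$. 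Both proofs rest on the same two pillars --- uniqueness of the normalized CGO solution for the limit coefficient $\mu$, and a compactness mechanism that converts pointwise convergence of $\mu_n$ into something quantitative (the paper uses Rellich--Kondrachov on the normalized sequence; you use compactness of the Cauchy transform on compactly supported densities) --- but yours is constructive and yields an explicit uniform resolvent bound, whereas the paper's contradiction argument produces no effective constant. Two details in your write-up need care. First, the Astala--P\"aiv\"arinta CGO solutions satisfy $\dbar f = \mu\,\overline{\dee f}$ (as the paper itself uses in Lemma \ref{lemma:v_n.est} and in the proof of Proposition \ref{prop:Beltrami}), so the correct integral equation for $\omega_n=\dbar M_n$ is the \emph{real-linear} one, $\omega_n = \mu_n e_{-k}\,\overline{\left(ik(1+\mathcal{C}\omega_n) + B\omega_n\right)}$, not your complex-linear version; the required uniform invertibility of $I-\mu_n e_{-k}\overline{B(\cdot)}$ on $L^p$ for $p$ in the critical interval, with norm depending only on $\kappa$ and $p$, is still supplied by the real-linear Beltrami operator theory in \cite{AIM09}, so the strategy survives unchanged. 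Second, the Cauchy transform $\mathcal{C}$ is not bounded on all of $L^p(\R^2)$ for $p>2$, so $K_n$ must be defined on the closed subspace of densities supported in $\overline{\D}$ (which contains every $\omega_n$, since $\mu_n$ does the truncating); there $\mathcal{C}$ is compact into $L^p(\D)$, and the operator-norm convergence $K_n\to K$ follows from the standard fact that a uniformly bounded sequence of multiplication operators converging strongly to zero, composed with a fixed compact operator, converges to zero in norm --- this is the precise content behind your appeal to dominated convergence, and it is indeed where hypothesis (i) enters.
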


\begin{proof} Let $c_n = \norm[W^{1,p}(\R^2)]{M_n - 1}$. If $c_n \to +\infty$ as $n \to \infty$, 
set $v_n = c_n^{-1} (M_n - 1)$. Since $\{ v_n \}$ is bounded in $W^{1,p}$, by passing to a subsequence we can assume that $\{ v_n \}$ has a weak limit, $v$. Note that $\norm[W^{1,p}(\R^2)]{v_n} = 1$. 

We first claim that, if such a limit exists, it is nonzero. Suppose, on the other hand, that $v_n \to 0$ weakly in $W^{1,p}(\R^2)$. It follows from the Rellich–Kondrachov Theorem that $v_n \to 0$ in $L^p_{\loc}(\R^2)$ .A short computation shows that
\begin{equation}
\label{dbar.vn}
\dbar v_n = \frac{\mu_n}{c_n} \dbar e_k + \mu_n \overline{\dee(e_k v_n)} 
\end{equation}
and, since $v_n \in W^{1,p}(\R^2)$ we may invert the $\dbar$ operator using the Cauchy transform and use standard estimates on the Cauchy transform (see, for example, \cite[Theorem 4.3.8]{AIM09}) to conclude that
\begin{align}
\label{vn.Cauchy.est}
\norm[L^p(\R^2)]{v_n} 
	&\lesssim_{\, p}\norm[L^{2p/(2+p)}(\R^2)]{\frac{\mu_n}{c_n} \, \dbar e_k }
				+ \norm[L^{2p/(2+p)}(\R^2)]{\mu_n \overline{(\dee e_k) v_n}} \\[5pt]
\nonumber
	&\quad
				+ \norm[L^{2p/(2+p)}(\R^2)]{\mu_n \overline{e_k \dee v_n} }.
\end{align}
The first right-hand term in \eqref{vn.Cauchy.est} clearly goes to zero as $n \to \infty$ since $c_n \to \infty$.  The function in the second term is supported in $\D$ owing to the factor $\mu_n$ and therefore also converges to zero since $v_n \to 0$ in $L^p_{\loc}(\R^2)$. The function in the third term is again supported in $\D$ and, using a version of the Caccioppoli inequality adapted to the $v_n$'s (see Lemma \ref{lemma:v_n.est} below), we have $\norm[L^p(\D)]{\dee v_n} \lesssim \norm[L^p(2\D)]{v_n} + \bigO{c_n^{-1}}$, which shows that the third term also goes to zero as $n \to \infty$. Thus, $v_n \to 0$ in $L^p(\R^2)$. Applying Lemma \ref{lemma:v_n.est} to 
the compactly supported function $\dbar v_n$ shows that, also $\norm[L^p]{\dbar v_n} \to 0$ as $n \to \infty$, contradicting the fact that $\norm[W^{1,p}(\R^2)]{v_n} = 1$ for all $n$. From this contradiction we conclude that $\{ v_n \}$ has a nonzero limit, again assuming that $c_n \to \infty$. 

Next, we show that the limit function $v$ is a weak solution of the equation 
$\dbar v = \mu \overline{\dee (e_k v)}$. For $\varphi \in C_0^\infty(\R^2)$ we compute from \eqref{dbar.vn} 
\begin{align*}
(\varphi, v_n) = c_n^{-1}( \varphi, \mu_n \dbar e_k) + (\varphi, \mu_n \overline{\dee (e_k v_n)}).
\end{align*}
where $(f,g) = \dint fg$. 
It is easy to see that the first right-hand term vanishes as $n \to \infty$. In the second term,
\begin{align*}
(\varphi, \mu_n \overline{\dee (e_k v_n)}) 
	&=  	(\dbar(e_{-k})\mu \varphi, \bar{v}_n) + 
			(e_k \mu \varphi, \overline{\dee v_n}) + 
			(\varphi(\mu_n -\mu), \overline{\dee(e_kv_n)})	\\
	&\to	(\dbar(e_{-k}) \mu \varphi, \vbar) + (e_k \mu \varphi, \overline{\dee v})
\end{align*}
since $\norm[W^{1,p}]{v_n} = 1$ and $v_n \to v$ in $L^p_{\loc}$. It follows that $v$ is a weak solution of $\dbar v = \mu \overline{\dee(e_k v)}$ with $\norm[W^{1,p}]{v} \leq 1$. 

Thus, assuming that $\norm[W^{1,p}(\R^2)]{M_n - 1}$ is not bounded, we have constructed a nonzero solution $v \in W^{1,p}(\R^2)$ of the equation $\dbar v = \mu \overline{\dee(e_k v)}$. However, this violates the uniqueness of the normalized CGO solution for Beltrami coefficient $\mu$ proved in \cite[Theorem 4.2]{AP06a}, a contradiction. We conclude that $\norm[W^{1,p}(\R^2)]{M_n - 1}$ is bounded uniformly in $n$.
\end{proof}

To complete the proof of Lemma \ref{lemma:Mn-bounded}, we need to establish the a priori bounds on the sequence $v_n$ constructed in its proof. To do so, we will need the \emph{a priori} estimate for solutions of the Beltrami equation from Theorem \ref{thm.aim.caccio}.

\begin{lemma}
\label{lemma:v_n.est}
Suppose that $v_n$ is a sequence of functions as constructed in the proof of Lemma \ref{lemma:Mn-bounded}. Then, the estimate
$$ \norm[L^p(\D)]{ \dee v_n} \lesssim c_n^{-1} + \norm[L^p(2\D)]{v} $$
where the implied constants are independent of $n$.
\end{lemma}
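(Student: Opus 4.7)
The plan is to apply the Caccioppoli inequality of Theorem~\ref{thm.aim.caccio} to the CGO solution $f_n = e^{ikx} M_n$, which satisfies the Beltrami equation with coefficient $\mu_n$ and hence the distortion inequality $|\dbar f_n| \leq \kappa |\dee f_n|$. Since the bound $\norm[\infty]{\mu_n} \leq \kappa < 1$ is uniform in $n$, the Caccioppoli constant in \eqref{Caccioppoli.ineq.} can be chosen independent of $n$.

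First I would fix a cutoff $\eta \in C_0^\infty(\R^2)$ with $\eta \equiv 1$ on $\overline{\D}$ and $\mathrm{supp}(\eta) \subset 2\D$. Theorem~\ref{thm.aim.caccio} then yields
$$ \norm[L^p(\R^2)]{\eta \nabla f_n} \lesssim \norm[L^p(\R^2)]{f_n \nabla \eta}, $$
and since $\eta \equiv 1$ on $\D$ while $\nabla \eta$ is supported in $2\D \setminus \D$, this restricts to $\norm[L^p(\D)]{\nabla f_n} \lesssim \norm[L^p(2\D)]{f_n}$. Next I would translate this into a bound on $M_n$: since $e^{ikx}$ is holomorphic in $x$, the identities $\dbar f_n = e^{ikx}\dbar M_n$ and $\dee f_n = e^{ikx}(\dee M_n + ik M_n)$, combined with the fact that $|e^{\pm ikx}|$ is bounded above and below on $2\D$ by positive constants depending only on the fixed parameter $k$, give the pointwise estimates $|\nabla M_n| \lesssim |\nabla f_n| + |M_n|$ and $|f_n| \lesssim |M_n|$ on $2\D$. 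Combining,
$$ \norm[L^p(\D)]{\nabla M_n} \lesssim \norm[L^p(\D)]{\nabla f_n} + \norm[L^p(\D)]{M_n} \lesssim \norm[L^p(2\D)]{M_n}. $$

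To conclude, I would substitute $M_n = 1 + c_n v_n$, so that $\nabla M_n = c_n \nabla v_n$ and $\norm[L^p(2\D)]{M_n} \leq \norm[L^p(2\D)]{1} + c_n \norm[L^p(2\D)]{v_n}$. Dividing by $c_n$ yields
$$ \norm[L^p(\D)]{\nabla v_n} \lesssim c_n^{-1} + \norm[L^p(2\D)]{v_n}, $$
which controls $\norm[L^p(\D)]{\dee v_n}$ in particular. I do not anticipate a serious obstacle; the estimate is essentially a direct application of the Astala--Iwaniec--Martin Caccioppoli inequality together with the normalization $M_n = 1 + c_n v_n$. The only point requiring care is verifying that all implied constants are independent of $n$, which follows from the uniform hypothesis $\norm[\infty]{\mu_n} \leq \kappa < 1$.
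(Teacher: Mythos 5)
Your proposal is correct and follows essentially the same route as the paper: apply the Astala--Iwaniec--Martin Caccioppoli inequality to $f_n=e^{ikz}M_n$ with a cutoff supported in $2\D$, note that the constant depends only on $\kappa$ and hence is uniform in $n$, then substitute $M_n=1+c_nv_n$ and divide by $c_n$. The only (cosmetic) difference is that the paper chooses $\eta=e^{-ikz}$ on $\D$ to cancel the exponential factor, whereas you take $\eta\equiv 1$ on $\D$ and absorb the bounds on $|e^{\pm ikx}|$ over $2\D$ into the $k$-dependent constants.
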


\begin{proof}
By construction, the function $f = e^{ikz}(c_nv_n+1)$ satisfies the Beltrami equation
\begin{equation*}
\dbar f = \mu_n \overline{\dee f}
\end{equation*}
and hence satisfies the distortion inequality.  Thus by Theorem \ref{thm.aim.caccio}, for a compactly supported smooth function $\eta$ we can write
\begin{equation}
\label{est.1}
\norm[p]{\eta \dee f} \leq C_{p, \kappa}\norm[p]{f\nabla \eta}
\end{equation}
Note that by the triangle inequality, we have
\begin{align}
\norm[p]{\eta \dee f} = \norm[p]{\eta\dee(e^{ikz}(c_nv_n+1))}
	&	\geq c_n\norm[p]{\eta e^{ikz}\dee v_n} \\
\nonumber
	&\quad - |k|\norm[p]{\eta e^{ikz}(c_nv_n+1)}
\end{align}
which enables us to write
\begin{align}
c_n\norm[p]{\eta e^{ikz}\dee v_n}
	&	\leq \norm[p]{\eta \dee(e^{ikz}(c_nv_n+1))}	\\
\nonumber
	&\quad  + |k|\norm[p]{\eta e^{ikz}} + c_n|k|\norm[p]{\eta e^{ikz} v_n}
\end{align}
Next, we apply \eqref{est.1} to obtain
\begin{align}
c_n\norm[p]{\eta e^{ikz}\dee v_n} 
	&	\lesssim  c_n\norm[p]{e^{ikz}(\nabla \eta) v_n} + \norm[p]{e^{ikz}\nabla \eta} \\
\nonumber
	&\quad + |k|\norm[p]{\eta e^{ikz}} + c_n|k|\norm[p]{\eta e^{ikz} v_n}.
\end{align}
Thus we conclude
\begin{align}
\norm[p]{\eta e^{ikz}\dee v_n}&  \lesssim  \norm[p]{e^{ikz}(\nabla \eta) v_n} + |k|\norm[p]{\eta e^{ikz}v_n} \\
\nonumber
&\quad + \frac{1}{c_n}\left(\norm[p]{e^{ikz}\nabla \eta} 
	+ |k|\norm[p]{\eta e^{ikz}}\right)
\end{align}
To obtain the desired estimate, we choose $\eta$ supported on the disk of radius 2 so that $\eta = e^{-ikz}$ in $\D$.
\end{proof}

We can now give the proof of Proposition \ref{prop:Beltrami} and thereby complete the proof of Theorem \ref{thm:N.approx}.

\begin{proof}[Proof of Proposition \ref{prop:Beltrami}]
By Lemma \ref{lemma:Mn-bounded}, the sequences $\{ M_{\pm \mu_n} - 1\}$ for either choice of sign are bounded in $W^{1,p}(\R^2)$. We will take a single choice of sign, the $+$ sign, and write $M_n$ for $M_{\mu_n}$ and $M$ for $M_\mu$ from now on. The sequence $\{ M_n - 1\}$ has a weak limit point in $W^{1,p}(\R^2)$ which we denote by $M^\sharp-1$. By the Rellich-Kondrachov theorem, $M_n-1$ converges in $L^p_{\loc}(\R^2)$ to  $M^\sharp-1$.  We wish to show that 
\begin{equation}
\label{Msharp.pde}
\begin{aligned}
\dbar M^\sharp	&=	\mu \overline{\dee \left( e_k M^\sharp \right)},\\
M^\sharp -1		&\in 	W^{1,p}(\R^2)
\end{aligned}
\end{equation}
since we can then conclude that $M^\sharp -1$ is nonzero (as the PDE does not admit the solution $M^\sharp =1$) and that $M^\sharp = M$ since the PDE is uniquely solvable for $M^\sharp-1 \in W^{1,p}(\R^2)$. 

From $\dbar M_n = \mu_n \overline{\dee \left( e_k M_n \right)}$ we conclude that for any $\varphi \in C_0^\infty(\R^2)$, 
\begin{align*}
-\left( \dbar \varphi, M_n \right) 
	&= \left(\varphi, \mu_n \overline{ \dee \left( e_k M_n \right)} \right)\\
	&= \left( \varphi, \mu  \overline{ \dee \left( e_k M_n \right)} \right) +
		\left((\mu_n-\mu),\overline{ \dee \left( e_k M_n \right)} \right)
\end{align*}
The second right-hand term vanishes as $n \to \infty$ by dominated convergence since $\mu_n - \mu$ is supported in $\D$ while $\{ \dee (e_k M_n) \}$ is uniformly bounded in $L^p(\R^2)$. Weak convergence of derivatives allows us to conclude that \eqref{Msharp.pde} holds. 
\end{proof}

\bibliographystyle{amsplain}
\bibliography{LPS}{}

\end{document}